\definecolor{aleacolor}{rgb}{0.16,0.59,0.78}
\renewcommand{\cite}{\citet}
\theoremstyle{plain}
\newtheorem{thm}{Theorem}[section]                                          
\newtheorem{prop}[thm]{Proposition}                          
\newtheorem{lem}[thm]{Lemma}
\newtheorem{cor}[thm]{Corollary}
\theoremstyle{definition}
\theoremstyle{remark}
\newtheorem{xmpl}[thm]{Example}
\makeatletter \@addtoreset{equation}{section} \makeatother
\newcommand{\N}{\mathbb{Z}_{+}}
\newcommand{\Z}{\mathbb{Z}}
\begin{document}

\title[QSDs for the Voter Model on Complete Bipartite Graphs]{Quasi-Stationary Distributions for the Voter Model on Complete Bipartite Graphs}

\author{Iddo Ben-Ari}
\author{Hugo Panzo}
\author{Philip Speegle}
\author{R. Oliver VandenBerg}

\address{Department of Mathematics\newline
University of Connecticut\newline
Storrs, CT 06269, USA}
\email{iddo.ben-ari@uconn.edu}
\urladdr{\url{http://iddo.ben-ari.uconn.edu/wordpress/}}

\address{Faculties of Electrical Engineering and Mathematics\newline
Technion -- Israel Institute of Technology\newline
Haifa 32000, Israel}
\email{panzo@campus.technion.ac.il}
\urladdr{\url{https://sites.google.com/view/hugopanzo/}}

\address{University of Alabama\newline
Tuscaloosa, AL 35487, USA}
\email{pspeegle@crimson.ua.edu}

\address{Kenyon College\newline
Gambier, OH 43022, USA}
\email{vandenberg1@kenyon.edu}

\thanks{Research performed during \href{http://markov-chains-reu.math.uconn.edu}{Markov Chains REU}, partially supported by NSA grant H98230-19-1-0022 to Iddo Ben-Ari. Hugo Panzo was supported at the Technion by a Zuckerman Fellowship.}

\subjclass[2000]{60F99, 60J10, 60K35, 82C22.} 
\keywords{Quasi-Stationary Distributions, Voter Model, Coalescing Random Walks, Complete Bipartite Graphs.}

\begin{abstract}
We consider the discrete-time voter model on complete bipartite graphs and study the quasi-stationary distribution (QSD) for the model as the size of one of the partitions tends to infinity while the other partition remains fixed. We show that the QSDs converge weakly to a nontrivial limit which features a consensus with the exception of a random number of dissenting vertices in the ``large'' partition. Moreover, we explicitly calculate the law of the number of dissenters and show that it follows the heavy-tailed Sibuya distribution with parameter depending on the size of the ``small'' partition. Our results rely on a discrete-time analogue of the well-known duality between the continuous-time voter model and coalescing random walks which we develop in the paper. 
\end{abstract}

\maketitle

\section{Introduction and Main Results}

The \emph{voter model} is an interacting particle system which models the evolution of opinions in a population of voters. In the classical version, the voters are vertices of $\Z^d$ which can hold the opinion ``0'' or ``1'' and the model evolves in continuous-time by having each vertex change its opinion at a rate proportional to the number of dissenting neighbors, see \cite{Liggett}. With these dynamics, it is clear that \emph{consensus} is an absorbing state and early investigations of the model were interested in the time required to reach consensus and characterization of nontrivial invariant measures. Besides regular lattices, the voter model has also been studied in discrete-time on heterogeneous graphs where it was shown in \cite{Redner_3} to display markedly different behavior, see also \cite{pull_model} where it is referred to as the \emph{asynchronous pull model}. Additionally, there are further extensions of the model which more closely approximate reality, see \cite{Redner_5} for a recent survey.

In this paper, we study the discrete-time voter model on the \emph{complete bipartite graphs} $K_{n,m}$. These are heterogeneous graphs whose vertex set can be partitioned into two groups, a ``large'' group $L$ of size $n$ and a ``small'' group $S$ of size $m$, where each vertex of $L$ is connected to all of the vertices of $S$ and vice versa, and there are no connections between vertices in the same group. While the time required to reach consensus in the voter model on $K_{n,m}$ and its behavior on the way to consensus have already been studied in \cite{Redner_3}, we investigate what happens when consensus is conditioned to never occur. More specifically, we study the \emph{quasi-stationary distribution} (QSD) for the voter model on $K_{n,m}$.

Loosely speaking, a QSD for a Markov chain with absorbing states is a stationary distribution for the chain conditioned on nonabsorption, see Section \ref{sec:QSD} for a precise definition. In the case of the voter model on $K_{n,m}$, one might ask what the distribution of opinions typically looks like if consensus hasn't been reached after a long time. Is the lack of consensus due to a small minority of dissenters or are the opinions relatively balanced? If $n\gg m$, does the distribution of opinions on $L$ differ qualitatively from that of $S$? In order to give a concise answer to these questions, we fix $m$ and find the limit in distribution of the sequence of QSDs as $n\to\infty$.  We point to the case of the voter model on the complete graphs $K_n$ in Example \ref{ex:complete_graph} as evidence that even the existence of a limiting distribution is not obvious.

Before stating our main results, we recall from \cite{Sibuya_1,compendium} that the \emph{Sibuya distribution} with parameter $\gamma\in (0,1]$ is a probability distribution on $\N=\{1,2,\dots\}$ with probability mass function $f_\gamma$ and probability generating function $\phi_\gamma$ given by

\begin{equation}\label{eq:fgamma}
\begin{split}
f_\gamma(k) &=\frac{\gamma}{k!}\prod_{j=1}^{k-1} (j-\gamma),~ k\in\N,\\
\phi_\gamma(z)& = 1-(1-z)^{\gamma},~|z|\leq 1.
\end{split}
\end{equation}
When $\gamma\in(0,1)$, the Sibuya distribution is heavy tailed and we note from \cite{Sibuya_5} that in this case $f_\gamma$ decays according to a power law with 
$$f_\gamma(k) \sim \frac{1}{\pi}\sin(\gamma \pi)\Gamma(1+\gamma)\frac{1}{k^{\gamma+1}}\text{ as }k\to\infty.$$
We denote this probability distribution by $\mathrm{Sib}(\gamma)$. See \cite{Sibuya_4,Sibuya_3,Sibuya_6} for some recent applications of the Sibuya distribution. We also use $\mathrm{Bern}(p)$ to denote a Bernoulli random variable which takes the values $1$ and $0$ with probability $p$ and $1-p$, respectively.
\begin{thm}
\label{th:main}
Let $C\sim \mathrm{Bern}(1/2)$ and $D\sim \mathrm{Sib}(\gamma_m)$ be independent, with $$\gamma_m =2 \left(1-\sqrt{1-\frac{1}{2m}}\right).$$
Then the distribution of opinions under the QSD for the voter model on $K_{n,m}$  as $n\to\infty$ converges weakly to the following:
\begin{enumerate}
    \item All vertices of $S$ have opinion $C$. 
    \item All but $D$ vertices in $L$ have opinion $C$. 
\end{enumerate}
\end{thm}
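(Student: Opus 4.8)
The plan is to pass to the lumped Markov chain that records only the pair $(a,b)$, where $a=\#\{1\text{'s in }L\}\in\{0,\dots,n\}$ and $b=\#\{1\text{'s in }S\}\in\{0,\dots,m\}$; this is legitimate because the dynamics on $K_{n,m}$ are invariant under permuting vertices within $L$ and within $S$. The absorbing states are the two corners $(0,0)$ and $(n,m)$, and the QSD $\nu_n$ is the unique Perron--Frobenius left eigenvector of the substochastic restriction $P_n$ to the transient states, with eigenvalue $\lambda_n$. The involution $0\leftrightarrow 1$, i.e.\ $(a,b)\mapsto(n-a,m-b)$, is a symmetry of $P_n$, so $\nu_n$ is invariant under it; this is exactly the $C\sim\mathrm{Bern}(1/2)$ factor, and it then suffices to analyze $\nu_n$ near the corner $(0,0)$ and show that the mass there concentrates on configurations with $b=0$ (consensus on $S$) and $a=O(1)$ (a tight number of dissenters in $L$), with no mass escaping to ``balanced'' states $a\asymp n$.

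Second, I would read the limiting dissenter law off the eigenvalue equation. At a state $(a,0)$ with $a\ge 1$ the only transient neighbors feeding in are $(a+1,0)$, via a down-step in $a$ with probability $\tfrac{a+1}{n+m}$, and $(a,1)$, via a down-step in $b$ with probability $\tfrac{n-a}{(n+m)n}$ (there is no inflow from $(a-1,0)$, since $a\to a+1$ requires $b\ge 1$), while the total exit rate is exactly $q(a,0)=\tfrac{a}{n}$. Hence $\nu_nP_n=\lambda_n\nu_n$ reads
\[
\Big(\tfrac{a}{n}-(1-\lambda_n)\Big)\nu_n(a,0)=\tfrac{a+1}{n+m}\,\nu_n(a+1,0)+\tfrac{n-a}{(n+m)n}\,\nu_n(a,1).
\]
Granting the concentration above, $\nu_n(a,1)$ vanishes faster than $\nu_n(a,0)$, so setting $\gamma:=\lim_n n(1-\lambda_n)$ and $f(a):=\lim_n 2\,\nu_n(a,0)$, multiplying by $n$ and letting $n\to\infty$ gives the one-step recursion
\[
(a-\gamma)\,f(a)=(a+1)\,f(a+1),\qquad a\ge 1.
\]
Its generating function $\phi(z)=\sum_{a\ge1}f(a)z^a$ then solves $(1-z)\phi'(z)=\gamma\big(1-\phi(z)\big)$ with $\phi(0)=0$, whence $\phi(z)=1-(1-z)^{\gamma}=\phi_\gamma(z)$; equivalently $f=f_\gamma$ as in \eqref{eq:fgamma}. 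Thus the dissenter law is \emph{automatically} $\mathrm{Sib}(\gamma)$, and the theorem reduces to identifying the constant $\gamma=\lim_n n(1-\lambda_n)$ and justifying the concentration.

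Third --- the crux --- I would pin down $\gamma=\gamma_m$ and control the $b$-excursions through the discrete-time coalescing-random-walk duality developed in the paper. Dualizing, the non-consensus event for $\eta_t$ corresponds to the lineages started from all $n+m$ vertices not having fully coalesced; in the long-time quasi-stationary regime exactly two lineages survive, and they partition the vertices into two opinion blocks, reproducing the label $C$ and identifying the dissenters with the minority block. Projecting the surviving pair onto its coarse ``side'' configuration (how many of the two sit in $S$) yields a small chain in which coalescence occurs only from the mixed state; a first-step/eigenvalue computation on this chain governs both the probability that $S$ is absorbed into a single block (forcing all dissenters into $L$) and the normalized killing rate $\lim_n n(1-\lambda_n)$. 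The latter is the root in $(0,1]$ of a quadratic characteristic equation, which one finds to be $\gamma_m^2-4\gamma_m+2/m=0$, i.e.\ $\gamma_m=2\big(1-\sqrt{1-1/(2m)}\big)$.

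The main obstacle is precisely this last determination of $\gamma_m$ and the tightness it rests on: the leading-order balance at $(a,0)$ fixes only the \emph{form} of the limit, since every $\gamma$ produces a Sibuya profile, so the true value must emerge at the next order, where the rare ($O(1/n)$ per step) excursions of $b$ away from $0$ --- which, once underway, push $a$ upward on the much faster $O(1/m)$ scale --- feed back into the $b=0$ layer and couple the two corners. Analyzing this two-scale, genuinely two-dimensional coupling head-on is delicate, and the role of the duality is exactly to linearize it into the low-dimensional coalescing-pair computation that produces the quadratic. I would also supply the routine a priori inputs: existence and uniqueness of each $\nu_n$ (finite irreducible substochastic chain) and a tightness bound excluding escape of mass to $a\asymp n$, for which the summable Sibuya tail $f_{\gamma_m}(k)\sim k^{-(1+\gamma_m)}$ furnishes the natural uniform control.
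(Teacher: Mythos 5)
Your outline coincides with the paper's own proof in its main moves: lumping to the pair chain $(a,b)$, using the relabeling symmetry to produce the $\mathrm{Bern}(1/2)$ factor, extracting the one-step recursion on the $b=0$ layer from the eigenvalue equation (your transition probabilities and exit rate $a/n$ are correct, and your equation is exactly the paper's \eqref{eq:pre_limit}), computing the killing rate via two coalescing walkers lumped by side (your quadratic $\gamma_m^2-4\gamma_m+2/m=0$ is precisely the characteristic equation of the matrix \eqref{eq:reduced_matrix} in Proposition \ref{pr:lambda_bipart}), and finishing by mass accounting. However, there are two genuine gaps, and they are exactly the steps the paper has to work for.

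First, the anchor. From the recursion $(a-\gamma)f(a)=(a+1)f(a+1)$, $a\ge 1$, the generating function satisfies $(1-z)\phi'(z)=f(1)-\gamma\,\phi(z)$, not $(1-z)\phi'(z)=\gamma\bigl(1-\phi(z)\bigr)$: your ODE silently assumes $f(1)=\gamma$, while the recursion alone determines $f$ only up to the multiplicative constant $f(1)$. You could recover $f(1)=\gamma$ from the normalization $\phi(1)=1$, but that requires the very tightness you postpone, and the justification you propose for tightness (uniform control from ``the summable Sibuya tail'') is circular, since the Sibuya law is the limit being established. The paper closes this loop non-circularly: summing the eigenvalue equation \eqref{eq:stationary_bipart} over all states yields the exact finite-$n$ identity $\lambda_{n,m}=1-\frac{2}{n+m}\bigl(\mu_{n,m}(1,0)+\mu_{n,m}(0,1)\bigr)$ (Proposition \ref{pr:lambda_mu_connection}), which, combined with the exact formula for $\lambda_{n,m}$ (Proposition \ref{pr:lambda_bipart}) and the vanishing of $\mu_{n,m}(0,1)$, pins $\mu_{\infty,m}(1,0)=\gamma_m/2$ with no tightness input; weak convergence then follows a posteriori because the pointwise limits already sum to $1$ (Portmanteau). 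Second, the concentration $\nu_n(a,b)\to 0$ for $b\ge 1$, which your limiting recursion needs in order to discard the $\nu_n(a,1)$ inflow term and which you ``grant,'' is the paper's Lemma \ref{lem:bold_center}; it has a real proof (propagate the eigenvalue equation in the $L$-coordinate and contradict summability of the QSD), not a routine one. A lesser point: your picture of the quasi-stationary regime as ``exactly two surviving lineages partitioning the vertices'' is a heuristic the paper never needs or proves; the rigorous content of the duality is the equality $\lambda_\mathrm{V}=\lambda_\mathrm{CRW}$ (Proposition \ref{prop:finally}) together with the reduction \eqref{eq:maximizer} to pairwise meeting times, after which your two-state eigenvalue computation is exactly the paper's and is correct.
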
 

\begin{cor}
The distribution of the number of disagreements along edges in $K_{n,m}$ under the QSD tends to $mD$, where $D\sim \mathrm{Sib}(\gamma_m)$. 
\end{cor}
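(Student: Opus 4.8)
The plan is to reduce this corollary to a direct counting argument using the weak convergence established in Theorem~\ref{th:main}. The key observation is that the number of disagreements along edges is a deterministic function of the opinion configuration, so I would first express this quantity explicitly in terms of the numbers of dissenting vertices in each partition and then pass to the limit.

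First I would set up notation: for a given opinion configuration on $K_{n,m}$, let $C$ be the consensus opinion toward which the configuration is mostly aligned, let $a$ be the number of vertices in $S$ holding the opinion opposite to $C$, and let $b$ be the number of vertices in $L$ holding the opposite opinion. Since every vertex of $L$ is connected to every vertex of $S$ and there are no within-group edges, an edge is a disagreement precisely when its two endpoints hold different opinions. Counting these, an edge between $S$ and $L$ is in disagreement exactly when exactly one of its endpoints dissents from $C$. A short computation gives that the total number of disagreeing edges equals $a(n-b)+b(m-a)$, which I would simplify and record.

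Next I would invoke Theorem~\ref{th:main}. Under the limiting QSD, all vertices of $S$ share the opinion $C$, so the limiting value of $a$ is $0$; meanwhile the number of dissenters in $L$ converges in distribution to $D \sim \mathrm{Sib}(\gamma_m)$, so $b \Rightarrow D$. Substituting $a=0$ into the disagreement count $a(n-b)+b(m-a)$ collapses it to $bm$, and therefore the number of disagreements converges in distribution to $mD$. To make this rigorous I would note that the map from a configuration to its disagreement count, restricted to configurations in which $S$ is at consensus, is simply multiplication of the $L$-dissenter count by $m$, which is continuous (indeed a fixed linear map on the relevant integer-valued coordinate), so weak convergence of the configurations and the continuous mapping theorem yield weak convergence of the disagreement counts to $mD$.

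The main subtlety—rather than a genuine obstacle—will be handling the vanishing contribution of the $S$-dissenters carefully: although $a \to 0$ in the limit, for finite $n$ the term $a(n-b)$ is multiplied by the diverging factor $n$, so I must confirm that under the sequence of QSDs the probability of any dissent in $S$ (equivalently $a \geq 1$) tends to zero fast enough that this term does not contribute in the limit. This is exactly the content of conclusion~(1) of Theorem~\ref{th:main}, which asserts that in the limit all of $S$ agrees with $C$; since $S$ has only finitely many vertices $m$, weak convergence of the finite-dimensional opinion configuration on $S$ to the constant $C$ forces $\mathbb{P}(a \geq 1) \to 0$, and the continuous mapping theorem then applies without the diverging factor causing trouble. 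Hence the disagreement count converges weakly to $mD$ as claimed.
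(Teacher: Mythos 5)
Your proof is correct and follows essentially the same route the paper intends: the corollary is stated without a separate proof as an immediate consequence of Theorem~\ref{th:main}, since under the limiting distribution every disagreeing edge joins one of the $D$ dissenters in $L$ to one of the $m$ vertices of $S$, giving exactly $mD$ disagreements. Your extra care in checking that the probability of any dissent in $S$ vanishes, so that the term $a(n-b)$ with its factor of order $n$ contributes nothing in the limit (no rate is even needed, since the count equals $bm$ exactly on the event $\{a=0\}$ whose probability tends to one), correctly fills in the detail the paper leaves implicit.
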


The proof of Theorem \ref{th:main} will be given at the end of Section \ref{sec:main}. The rest of the paper is organized as follows. In Section \ref{sec:QSD} we recall some important facts from the theory of quasi-stationary distributions for finite state Markov chains. We describe the voter model on general finite graphs in Section \ref{sec:voter_model}. In Section \ref{sec:duality}, we develop for general finite graphs a discrete-time analogue of the well-known duality between the continuous-time voter model and coalescing random walks. We use this duality in Section \ref{sec:main} to compute explicitly the geometric tail of the time to consensus in the voter model on $K_{n,m}$ and subsequently prove our main results.

\section{Quasi-Stationary Distributions}
\label{sec:QSD}
Here we give a quick summary of the theory of quasi-stationary distributions for finite state Markov chains, see \cite{qsdbook} for more details. To this end, suppose that ${\bf Y}=(Y_t:t\in \Z_+)$ is a Markov chain on a finite state space ${\bar \Omega}$ with transition function ${\bar S}$. Recall that a state  $i$ is absorbing  if 
$$ P_{i}(Y_1 =i)=1.$$
We will assume 
\begin{enumerate} 
\item $\Delta$, the set of all absorbing states, is nontrivial:  it is not empty and its complement is not empty; and 
\item $\Delta$ is accessible from every state. That is, for every state $i$, there exists $t\in \Z_+$ such that $$ P_i (Y_t \in \Delta)>0.$$ 
\end{enumerate} 
Letting 
$$ \tau =\inf\{t\in \Z_+: Y_t \in \Delta\},$$ 
it follows from our assumptions that $\tau < \infty$ almost surely under any initial distribution. Let  $S$ denote the substochastic transition function obtained by restricting  ${\bar S}$ to the complement of $\Delta$ in ${\bar \Omega}$. We denote this complement by  $\Omega$. In other words, $S$ is the principal submatrix obtained from ${\bar S}$ by removing all rows and columns corresponding to states in $\Delta$. Let $\mu$ be an initial distribution for ${\bf Y}$ whose support is contained in $\Omega$. As there is no risk of ambiguity, we will abuse notation and also denote its restriction to $\Omega$ by $\mu$.  We have 

$$ P_{\mu}(Y_t = j , \tau>t) = \mu S^t (j).$$ 
In particular, 
$$P_{\mu}(\tau>t)= \mu S^t {\bf 1}=\sum_{j\in \Omega} \mu S^t(j).$$ 
Furthermore, 
$$ \lim_{t\to\infty} P_{\mu} (\tau>t)^{1/t}$$ 
exists. Moreover, if one considers the restriction of $S$ to the states accessible from the support of $\mu$, then the limit above  coincides with spectral radius of the resulting principal submatrix. 

A probability  distribution $\nu$  on $\bar \Omega$  is called a quasi-stationary distribution (QSD) if 
\begin{equation} 
\label{eq:QSD} P_{\nu} (Y_t =j | \tau > t) = \nu(j), ~j \in \Omega.
\end{equation} 
for all $t\in \Z_+$. Clearly, $\nu$ is then supported on $\Omega$, and therefore will be viewed  as a probability measure on $\Omega$.  Furthermore,  \eqref{eq:QSD} can be rewritten as 
$$\frac{  \nu S^t e_j }{\nu S^t {\bf 1}} = \frac{ \nu S^t (j) }{C_{\nu}(t) }=\nu (j),$$ 
where $e_j(k)= 1$ if $k=j$ and $0$ otherwise,  and  $C_\nu (t) = \sum_{k\in \Omega} \nu S^t(k) $. Equivalently, 
$$ \nu S^t  = C_\nu(t)  \nu.$$ 
Plugging $t=1$ into the above equation leads to the following well-known result. 
\begin{prop}
\label{prop:QSD}\leavevmode
\begin{enumerate}
\item A probability vector $\nu$ on $\Omega$  is a QSD if and only if $\nu$ is a left eigenvector for $S$ with a strictly positive eigenvalue $\lambda$. That is, if
$$ \nu S = \lambda \nu,$$
with $\lambda$ being the spectral radius of $S$ restricted to the linear space spanned by the indicators of the  support of $\nu$.
\item If $\nu$ is a QSD, then the distribution of $\tau$ under $P_\nu$ is geometric with parameter $1-\lambda$.
\end{enumerate} 
\end{prop}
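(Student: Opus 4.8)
The plan is to leverage the algebraic reformulation already carried out above, where the defining identity \eqref{eq:QSD} was shown to be equivalent to $\nu S^t = C_\nu(t)\,\nu$ for every $t\in\Z_+$. Setting $t=1$ and writing $\lambda := C_\nu(1) = \nu S\,{\bf 1}$, the forward implication of part (1) is immediate: any QSD satisfies $\nu S = \lambda\nu$, so $\nu$ is a left eigenvector of $S$. Since a QSD must render the conditioning in \eqref{eq:QSD} well defined at $t=1$, we have $P_\nu(\tau>1) = \nu S\,{\bf 1} = \lambda > 0$, which supplies the strict positivity of the eigenvalue.

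For the converse I would start from $\nu S = \lambda\nu$ with $\lambda > 0$ and iterate to get $\nu S^t = \lambda^t \nu$, whence $C_\nu(t) = \lambda^t\,\nu\,{\bf 1} = \lambda^t > 0$ and $\nu S^t(j)/C_\nu(t) = \nu(j)$ for all $t$; this is exactly \eqref{eq:QSD}, so $\nu$ is a QSD. Note that this same computation yields the clean formula $P_\nu(\tau>t) = \lambda^t$, which I will reuse for part (2).

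The step I expect to require the most care is the identification of $\lambda$ with the spectral radius of the restricted operator, since this is where nonnegativity and Perron--Frobenius theory enter. Let $R = \{j\in\Omega : \nu(j) > 0\}$ be the support of $\nu$ and let $S_R$ be the principal submatrix of $S$ indexed by $R$. Reading the eigenvalue equation off a coordinate $j\notin R$ gives $\sum_{k\in R}\nu(k)S(k,j) = 0$; since $\nu(k) > 0$ on $R$ and $S\geq 0$ entrywise, this forces $S(k,j)=0$ for all $k\in R$ and $j\notin R$, so the coordinate subspace spanned by $\{e_j : j\in R\}$ is invariant under the left action of $S$ and the restriction is precisely $S_R$. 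Restricted to $R$, the vector $\nu_R$ is strictly positive and satisfies $\nu_R S_R = \lambda\nu_R$. The key fact is that a nonnegative matrix possessing a strictly positive left eigenvector must have the corresponding eigenvalue equal to its spectral radius; I would prove this by pairing $\nu_R$ against a nonnegative right Perron eigenvector $u$ of $S_R$ (which exists by Perron--Frobenius), since $\lambda(\nu_R u) = (\nu_R S_R)u = \nu_R(S_R u) = \rho(S_R)(\nu_R u)$ and $\nu_R u > 0$ forces $\lambda = \rho(S_R)$. Pairing against the right eigenvector sidesteps any need to analyze the possibly reducible structure of $S_R$.

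Finally, part (2) follows directly from the formula $P_\nu(\tau>t) = \lambda^t$ established in the second paragraph. Since $\nu$ is supported on $\Omega$ we have $\tau\geq 1$ almost surely under $P_\nu$, and $P_\nu(\tau = t) = P_\nu(\tau>t-1) - P_\nu(\tau>t) = \lambda^{t-1}(1-\lambda)$ for $t\in\N$, which is the geometric law with parameter $1-\lambda$.
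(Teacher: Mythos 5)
Your proof is correct and follows essentially the same route as the paper, which derives the reformulation $\nu S^t = C_\nu(t)\,\nu$ and obtains the proposition by plugging in $t=1$, treating the remaining details as well known. The extra steps you supply---iterating the eigenvalue equation for the converse, the Perron--Frobenius pairing $\lambda(\nu_R u) = \rho(S_R)(\nu_R u)$ against a nonnegative right eigenvector to identify $\lambda$ with the spectral radius of the restricted matrix, and the computation $P_\nu(\tau=t)=\lambda^{t-1}(1-\lambda)$---are all sound and correctly fill in exactly what the paper leaves implicit.
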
 

As for existence and uniqueness of the QSD, as well as for convergence of the conditioned chain to the QSD, the Perron-Frobenius theorem  \cite[Theorem 8.4.4]{horn_johnson} and the limit theorem for primitive matrices \cite[Theorems 8.5.1 and 8.5.3]{horn_johnson} yield the following result.
\begin{thm} \leavevmode
\label{thm:perron} 
\begin{enumerate} 
\item If $S$ is irreducible then there exists a unique QSD. 
\item If  $S$ is irreducible and aperiodic (also known as primitive), then for any initial distribution $\mu$,  
$$\lim_{t\to\infty} P_\mu (Y_t \in \cdot ~| \tau>t) = \nu,$$ 
 where $\nu$ is the unique QSD. 
\end{enumerate} 
\end{thm}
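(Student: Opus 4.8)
The plan is to derive both parts directly from the Perron--Frobenius theory of the nonnegative matrix $S$, using the characterization of QSDs recorded in Proposition~\ref{prop:QSD}. Throughout I regard $S$ as the $|\Omega|\times|\Omega|$ nonnegative matrix obtained in Section~\ref{sec:QSD}, which by hypothesis is irreducible. By the Perron--Frobenius theorem \cite[Theorem 8.4.4]{horn_johnson}, its spectral radius $\lambda:=\rho(S)$ is a strictly positive, simple eigenvalue.

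For part (1), I would invoke Perron--Frobenius in the following form: for an irreducible nonnegative matrix, $\lambda$ admits a strictly positive left eigenvector, and every nonnegative eigenvector of $S$ is a positive scalar multiple of it. Normalizing this positive left eigenvector to a probability vector $\nu$, so that $\nu S=\lambda\nu$, yields a QSD by the first part of Proposition~\ref{prop:QSD}, since $\nu$ has full support $\Omega$ and $\lambda$ is then the spectral radius of $S$ on that support. For uniqueness, suppose $\nu'$ is any QSD. By Proposition~\ref{prop:QSD} it is a nonnegative (indeed probability) left eigenvector of $S$, so by the uniqueness clause of Perron--Frobenius it must be a positive multiple of $\nu$; being a probability vector, $\nu'=\nu$.

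For part (2), I would start from the identity derived in Section~\ref{sec:QSD},
$$P_\mu(Y_t=j\mid\tau>t)=\frac{\mu S^t(j)}{\mu S^t\mathbf{1}}=\frac{\mu(\lambda^{-1}S)^t(j)}{\mu(\lambda^{-1}S)^t\mathbf{1}},$$
which is legitimate because the denominator is positive. As $S$ is primitive, the limit theorem for primitive matrices \cite[Theorems 8.5.1 and 8.5.3]{horn_johnson} gives $(\lambda^{-1}S)^t\to L$, where $L=h\nu$ is the rank-one spectral projection onto the Perron eigenspace, $h$ being the strictly positive right eigenvector normalized so that $\nu h=1$. Hence $\mu(\lambda^{-1}S)^t\to(\mu h)\,\nu$, and since $h>0$ while $\mu$ is a probability vector the scalar $\mu h$ is strictly positive. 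Evaluating the numerator at $j$ and the denominator via $\nu\mathbf{1}=1$ gives
$$\lim_{t\to\infty}P_\mu(Y_t=j\mid\tau>t)=\frac{(\mu h)\,\nu(j)}{\mu h}=\nu(j),$$
which is the asserted convergence.

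There is no single hard obstacle here; the delicate points are all bookkeeping. One must confirm that $\lambda>0$ so that dividing by $\lambda^t$ is permitted, argue from irreducibility that a QSD cannot be supported on a proper subset of $\Omega$ (this is what forces the support and eigenvalue conditions of Proposition~\ref{prop:QSD} to select exactly the Perron data and hence gives uniqueness), and keep the left/right eigenvector normalization consistent so that the rank-one limit $L=h\nu$ collapses the ratio to $\nu(j)$ with the nonvanishing constant $\mu h$ cancelling cleanly.
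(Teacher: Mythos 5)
Your proposal is correct and takes essentially the same approach as the paper: the paper establishes this theorem simply by invoking the Perron--Frobenius theorem and the limit theorem for primitive matrices from Horn and Johnson, and your argument is precisely the standard unwinding of those citations via Proposition~\ref{prop:QSD}. Nothing in your bookkeeping (positivity and simplicity of the Perron root, uniqueness of nonnegative left eigenvectors, and the rank-one limit $(\lambda^{-1}S)^t\to h\nu$ with $\mu h>0$) departs from that route.
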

\section{Voter Model on a Finite Graph} \label{sec:voter_model}
Let $G=(V,E)$ be a finite, connected graph with vertex set $V$ and edge set $E$.  A {\it coloring} of $G$ is a function $f$ from the $V$ to the nonnegative integers.  The number assigned to vertex $v\in V$ by $f$, namely $f(v)$, is the color of $v$ or the opinion of $v$. We will use the terms ``color'' and ``opinion'' interchangeably.  The model we will study in this paper is a discrete-time version of the voter model on $G$. This  is a Markov chain on the colorings of $G$ which evolves during each unit of time as follows. A vertex is picked uniformly, this vertex samples a neighbor uniformly, then the former vertex adopts the opinion of the latter. We say that two neighboring vertices are in agreement if their colors are identical. Otherwise, they are in disagreement. States with no disagreements are {\it consensus states}.  Note that the set of absorbing states is the set of consensus states.

Since  $G$ is connected, the model reaches a consensus with probability $1$. This is because by construction there is positive probability to reach a consensus from any state within a certain number of steps. Since the graph is finite, this implies that the time to reach a consensus is dominated by a geometric random variable. 

We write ${\bf \eta}=(\eta_t:t\in \Z_+)$ for the discrete-time voter model on $G$, with $\eta_t (v)$ representing the color (or opinion) of vertex $v$ at time $t$. A state of the system is  therefore a coloring $\eta: V \to \Z_+$ of $G$.  The probability of a transition from $\eta$ to $\eta'$ is positive if and only if there exists  $(v,v')\in V \times V$  such that
\begin{enumerate} 
\item $\{v,v'\}\in E$;
\item $\eta'(v) = \eta (v')$; and 
\item $\eta'(u) = \eta(u)$ for all $u\neq v$.
\end{enumerate} 
Now if the pair $\eta$ and $\eta'$ satisfy the above conditions, then a transition is obtained by first uniformly  sampling the vertex $v$ among all those for which a matching $v'$ exists, and then adopting the opinion of $v'$. This leads to the following transition function: 

\begin{equation} 
\label{eq:TF_voter} p(\eta,\eta') = \frac{1}{|V|} \sum_{v\in V}\sum_{v'\sim v} \frac{ {\bf 1}_{\eta'(v)}\big(\eta(v')\big)}{\mathrm{deg}(v)}\prod_{u\ne v} {\bf 1}_{\eta'(u)}\big(\eta(u)\big).
\end{equation} 
All other transitions are not allowed.

Bringing the discussion from Section \ref{sec:QSD} into the context of the voter model on $G$, the absorption time $\tau$ is the  time of first consensus, that is 
\begin{equation} 
\label{eq:tau_defn}
\tau = \inf\{t\in \Z_+: \eta_t (v) = \eta_t(v')\mbox{ for all }v,v'\in V\}.
\end{equation} 
Using the subscript $\mathrm{V}$ to designate voter model, we write $\lambda_\mathrm{V}(G,\mu)$ for the spectral radius associated with the initial distribution $\mu$, 
\begin{equation}
\label{eq:lambda_Vmu}
\lambda_\mathrm{V}(G,\mu) = \lim_{t\to\infty} P_\mu(\tau>t)^{1/t}.
\end{equation} 

\section{Coalescing Random Walks and Time to Absorption}\label{sec:duality}
The key to our analysis is based on the duality between the continuous-time voter model and coalescing random walks. This duality is well known and the reader is directed to \cite{Durrett_particles,crw,RIO_CRW} and references therein for an exposition. In this section, we develop a discrete-time analogue of this duality. For the purposes of presentation and in order to make it useful for future work, we will consider the voter model on a general finite connected graph $G=(V,E)$. As far as the authors know, such a general treatment of this duality in discrete-time hasn't appeared in the literature before.

The first step towards finding a QSD is identifying $\lambda_\mathrm{V} (G,\mu)$. This is a nontrivial problem in general. The connection with coalescing random walks that will be described in this section simplifies the analysis of the time until consensus by  identifying the distribution of $\tau$ with the distribution of the time until two random walks on the graph first meet.  The idea is to  describe the ``flow'' or propagation of opinions back in time, tracing whose opinion each vertex inherited from previous steps, going all the way back to time zero.  Following the origin of an opinion of a given vertex  backward in time is a random walk on the graph, and the family of resulting random walks, indexed by the vertices of the graph, is a process known as coalescing random walks.  

In passing from time  $t-1$ to $t$ in the voter model, we first uniformly select a vertex $v$, then uniformly select a neighbor $u$ and assign $\eta_{t}(v)=\eta_{t-1}(u)$. For each $t\in \{1,2,\dots\}$, the sampling of vertex $v$ and its neighbor is independent of and identically distributed as the respective sampling for other times. Furthermore, this sampling is also independent of the actual opinions up to time $t-1$. Fix some time $T\in\N$.  We will construct a random directed graph $\mathcal{G}_T$ on $V\times \{0,\dots,T\}$  which would represent the same process, but with time reversed. Why reverse time? Because we eventually  want to trace whose original opinion (opinion at time $0$) each of the vertices holds at time $T$. Below we denote a directed arrow from $(u,n)\in V \times \{0,\dots,T\}$ to $(u',n')\in V \times \{0,\dots,T\}$ by $(u,n)\to(u',n')$.

We now describe the construction. This is done in three steps. 

\subsubsection*{1. Adopting others' opinions.}
 If at time $t=T-n$ the vertex $v$ is selected to adopt the opinion of vertex  $u$ at time $t-1=T-(n+1)$, we will draw a directed arrow from $(v,n)$ to $(u,n+1)$.  We begin from $t=T$, and end at $t=1$.  This describes which vertex got whose opinion and when. See Figure \ref{fig:adopting_others} for an illustration of this procedure on a star graph with $T=6$. Note that the $t$-time of the voter model runs from left to right while the $n$-time of the random directed graph runs from right to left.

\subsubsection*{2. Keeping one's opinion.}
Since all vertices but one keep their opinions from time $t-1$ to time $t$, we add arrows to represent this as well. To do that let $h(n)$ denote the unique vertex $(v,n)$ with an arrow to some $(u,n+1)$, as obtained in Step 1.  For all $v\in V-\{ h(n)\}$, we draw an arrow from $(v,n)$ to $(v,n+1)$. See Figure \ref{fig:not_adopting} for an illustration of this stage. 

\subsubsection*{3. Removing useless arrows.}
At the end of Step 2, for every $v\in V$ there exists a unique path  from $(v,0)$ to  $(\cdot,T)$. That is,  given $v\in V$, the unique path is a sequence $(v_0=v,0)\to (v_1,1)\to \dots (v_T,T)$ where $v_{n+1}$ is the unique vertex $v\in V$  satisfying $(v_n,n)\to (v,n+1)$. Since the path is determined by the choice of $v$ and $T$,  we denote it by ${\bf X}^T(v)=(X_n^T(v):n=0,\dots,T)$, where $X_n^T(v)=v_n$. An arrow is useless if no path  $(v,0)\to \dots \to (v_T,T)$ uses it. We will remove those from our graph $\mathcal{G}_T$.  With this our construction is complete. This stage is illustrated in Figure \ref{fig:final}.

\begin{figure}
\centering
\includegraphics[width=\linewidth]{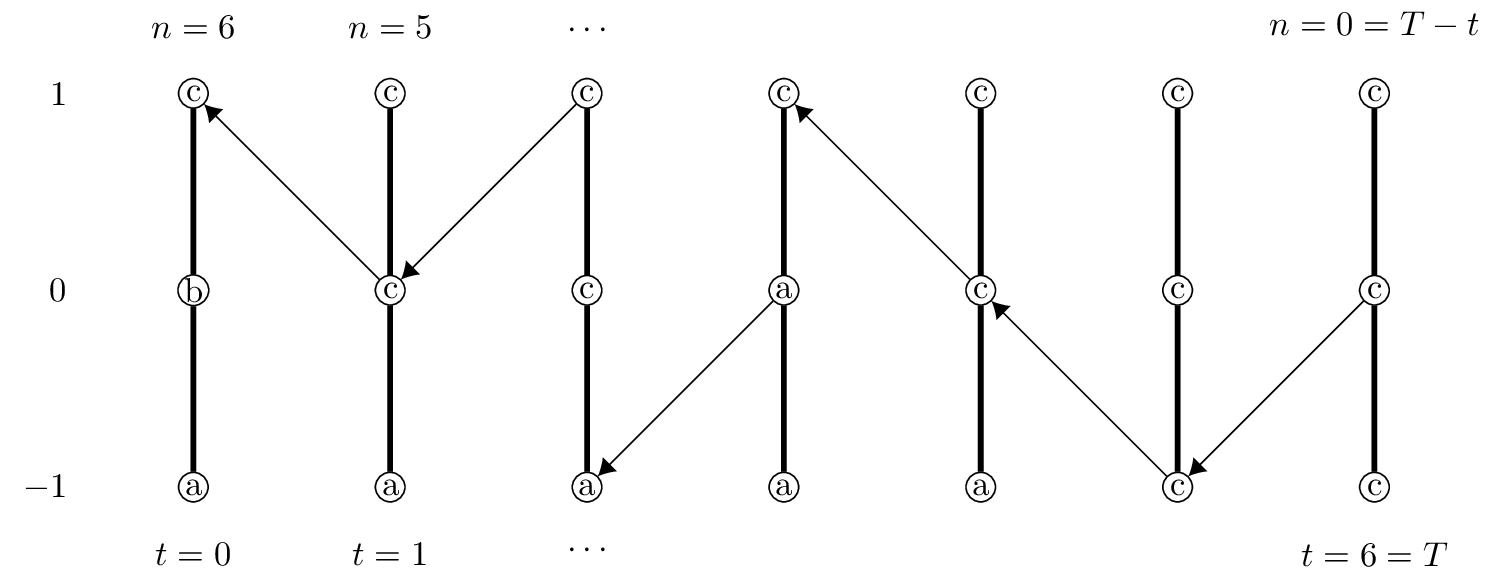}
\caption{Realization of the voter model on a simple star graph as a function of time.  Vertices are the circles,  labeled  $0,\pm 1$ and edges are the vertical line segments. Opinion at each  vertex is the letter inscribed in the circle. The time for the voter model appears at the bottom, while  reversed time, the second component in the vertices of the random graph $\mathcal{G}_T$, appears at the top. An arrow  from  $(v,n)$   to vertex $(u,n+1)$  represents vertex $v$ adopting at time $t=T-n$ the opinion of vertex $u$ at time $t-1=T-(n+1)$.  } \label{fig:adopting_others}
\end{figure}

\begin{figure} 
\centering
\includegraphics[width=.95\linewidth]{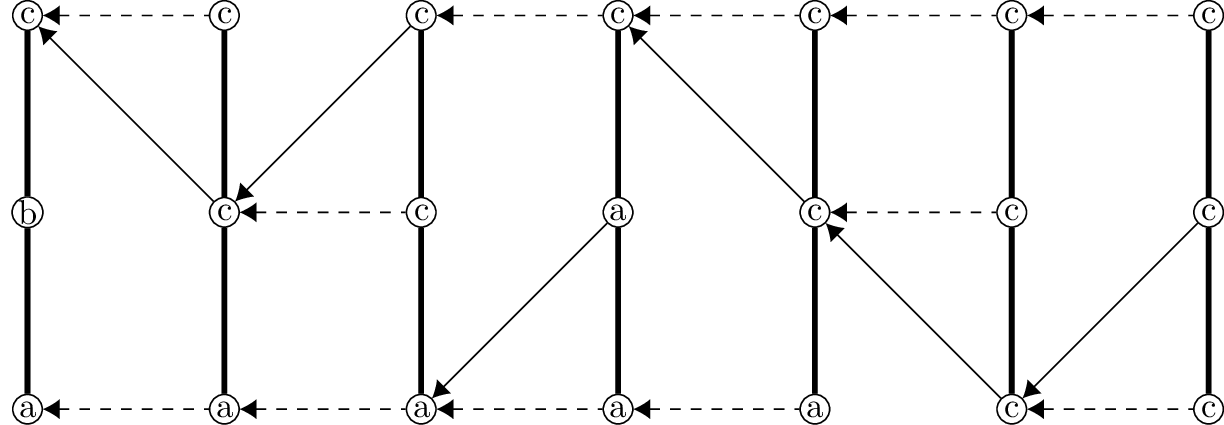}
\caption{Same realization as Figure \ref{fig:adopting_others}, with dashed horizontal arrows representing vertices keeping their opinion.} 
\label{fig:not_adopting}
\end{figure}

\begin{figure}
\centering
\includegraphics[width=.95\linewidth]{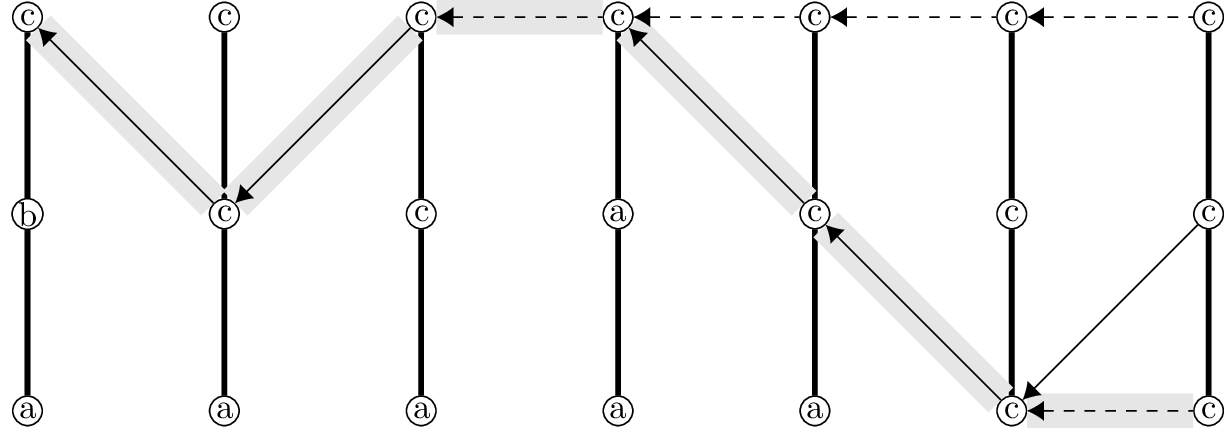}
\caption{The random graph $\mathcal{G}_6$, obtained  after removing useless arrows from Figure \ref{fig:not_adopting}. The path of the random walk ${\bf X}^6 (-1)$ has a shadow.}
\label{fig:final} 
\end{figure}

Now that we have completed the construction of the random graphs, we do some analysis.  Each of the random graphs is determined by Step 1 while Steps 2 and 3 are deterministic functions of it. From our construction, for each $v\in V$,  ${\bf X}^T(v)$ is a Markov chain on $G$ whose initial distribution is the point-mass at $v$ and with transition function 
$$p(u,u) =  \frac{|V|-1}{|V|},~ p(u,w) = \frac{1}{|V|}\frac{1}{\mathrm{deg}(u)},~ \{u,w\}\in E.$$
Equivalently, ${\bf X}^T(v)$ is a lazy random walk on $G$ which starts at $v$ and has probability $1-\frac{1}{|V|}$ of staying put at each step. 

Next we define a system of coupled random walks ${\bf Y}= (Y(v):v \in V)$ whose distribution up to time $T$ coincides with that of ${\bf X}^T$. We begin by setting $Y_0(v)=v$ for each $v\in V$. Assuming that $Y_s(\cdot)$ is defined for $s\le t$, we uniformly and independently sample ${\bf v} \in V$ and a neighbor ${\bf u}$ of ${\bf v}$. If $Y_t (\cdot)\ne {\bf v}$, then we set $Y_{t+1}(\cdot)= Y_t (\cdot)$. Otherwise we set $Y_{t+1} (\cdot) = {\bf u}$. We will refer to ${\bf Y}$ as the {\it coalescing random walks}. Note that the joint distribution of the walks ${\bf X}^T(v),v \in V$ coincides with the restriction of the coalescing random walks to the time interval $\{0,\dots,T\}$. 

For any two distinct vertices $v,v'\in V$, let 
$$\sigma_{v,v'}^T = \inf\{n\in \{0,\dots,T\}: X_n^T(v)=X_n^T(v')\},\mbox{ with }\inf \emptyset = \infty, $$
and let 
$$\sigma_{v,v'}= \inf \{n \in \Z_+: Y_n (v)=Y_n (v')\}.$$ 
Then
\begin{equation} 
\label{eq:use_walks}P( \sigma_{v,v'}^T = t) = P(\sigma_{v,v'}=t),~t\le T.
\end{equation}
Also, let 
$$\sigma = \max_{v,v'}\sigma_{v,v'}.$$ 

Continuing, assume that initially all opinions are distinct. Without loss of generality this can be expressed as $\eta_0(v)=v,~v\in V$. We denote this initial state of the system by ${\bf i}$. Observe the following: 
\begin{enumerate} 
\item Under $P_{\bf i}$, the distribution of $\tau$, the consensus time, and of $\sigma$ coincide. 
\item Let $\mu$ be any initial distribution for the voter model on $G$. Then the distribution of  $\tau$ under $P_\mu$ is stochastically dominated by its distribution under $P_{\bf i}$. 
\end{enumerate} 
Before we continue, we would like to recall a fundamental but useful fact. Suppose that $\eta$ is a $\N$-valued random variable with the property that for some $\lambda \in [0,1]$, 
$$\lambda = \lim_{n\to\infty} P(\eta>n)^{1/n}.$$ 
Letting $\rho\geq 1$, we can write 
\begin{equation} 
\label{eq:series} E[\rho ^\eta ] =\sum_{k=1}^\infty \rho^k \big( P( \eta > k-1) - P(\eta>k)\big) = 
\rho +(\rho-1) \sum_{k=1}^\infty\rho^k  P( \eta>k),
\end{equation} 
and therefore it follows from the Cauchy-Hadamard theorem that the radius of convergence of the power series on the right-hand side of \eqref{eq:series} is $1/\lambda$. In particular,  
\begin{equation} \label{eq:radius} \frac{1}{\lambda} = \sup\{\rho : E[\rho^\eta]< \infty\}.
\end{equation} 

Now define 
$$\lambda_\mathrm{CRW}(G)=\lim_{n\to\infty} P(\sigma>n)^{1/n}$$
and recall that 
$$ \lambda_\mathrm{V}(G,\mu) = \lim_{n\to\infty}P_{\mu}(\tau>n)^{1/n}.$$ 
Both limits exists as $\sigma$ and $\tau$ are hitting times of finite-state Markov chains and decay geometrically (possibly with a polynomial correction). Therefore it follows from \eqref{eq:radius} that 
\begin{equation}\label{eq:CRW_radius}
\frac{1}{\lambda_\mathrm{CRW}(G)} = \sup\{\rho:  E[\rho^{\sigma}]<\infty\},
\end{equation}
as well as 
\begin{equation}\label{eq:variational}
\frac{1}{\lambda_\mathrm{V}(G,\mu)} = \sup\{\rho:E_\mu [\rho^\tau]<\infty\}.
\end{equation}
The observations above also imply 
\begin{equation}
    \label{eq:those_lambdas}
     \lambda_\mathrm{V}(G,\mu)\le  \lambda_\mathrm{CRW}(G),
\end{equation}
and  $\mu ={\bf i}$ is a sufficient condition for equality. 

We will now examine other sufficient conditions for an equality. For  $\rho \ge 1$,
\begin{equation} 
\label{eq:equivalence}
\max_{v,v'} E[\rho^{ \sigma_{v,v'}}]\le E[\rho^{\sigma}]\le \sum_{v,v'}E[\rho^{\sigma_{v,v'}}].
\end{equation} 
It therefore follows that $E[\rho^{ \sigma}]< \infty$ if and only if $\max_{v,v'}E[\rho ^{\sigma_{v,v'}}]<\infty$. Now we can write 
\begin{align}
\lambda_\mathrm{CRW}(G) &=1/ \sup\{\rho:  \max_{v,v'}E[\rho ^{\sigma_{v,v'}}]<\infty\}\nonumber \\
&=\max_{v,v'}1/ \sup\{\rho: E[\rho ^{\sigma_{v,v'}}]<\infty\}\nonumber \\
& =  \max_{v,v'} \lim_{n\to\infty} P(\sigma_{v,v'}>n)^{1/n}\nonumber \\ 
&=\lim_{n\to\infty}\left(\max_{v,v'} P(\sigma_{v,v'}>n)\right)^{1/n}\label{eq:maximizer}
\end{align}
where the first equality follows from \eqref{eq:CRW_radius} and \eqref{eq:equivalence} and the third equality uses \eqref{eq:radius}.
 
Fix $t\ge 0$ and let $v,v'\in V$ be distinct. If $X^t_t(v)=u\ne u'= X^t_t(v')$ and $\eta_0(u)\ne \eta_0(u')$, then necessarily $\eta_t(v)=\eta_0(u)$ and $\eta_t(v')=\eta_0(u')$, hence $\tau >t$. Summing, we have 
$$P_\mu (\tau>t) \ge \sum_{u,u'}P_\mu\left(X_t^t(v)=u,X_t^t(v')=u',\eta_0(u)\ne \eta_0(u')\right),$$
and we can sum over all $u,u'$ because $\eta_0(u)\ne \eta_0(u')$ implies $u\ne u'$. Since $\eta_0$ is independent of the random walk, we can decouple the condition on the random walk from the condition on the initial opinions, and limit the summation only to pairs $u,u'$ where $u\ne u'$. Using \eqref{eq:use_walks}, this gives 
\begin{align*} P_\mu(\tau>t) &\ge  \sum_{u\ne u'}P\left(X_t^t(v)=u,X^t_t(v')=u'\right)P_{\mu}\big(\eta_0(u)\ne \eta_0(u')\big)\\
 & \ge \sum_{u\ne u'}P\left(X_t^t(v)=u,X^t_t(v')=u'\right) c\\
 & = P(\sigma_{v,v'}>t)\,c
\end{align*} 
where 
\begin{equation}\label{eq:distinct}
\begin{split}
c&=\min_{u\ne u'}P_{\mu}\big(\eta_0(u)\ne \eta_0(u')\big)\\
&\geq P_\mu(\text{all initial opinions are distinct}).
\end{split}
\end{equation}
As long as $c>0$, this implies that the geometric decay  of $\tau$ starting from $\mu$ is at least as slow as that of $\sigma_{v,v'}$. Since $v,v'$ were arbitrary, it follows from \eqref{eq:maximizer} that 
\begin{equation}\label{eq:relaxed_1}
\lambda_\mathrm{CRW}(G) \le \lambda_\mathrm{V}(G,\mu).
\end{equation}
In view of \eqref{eq:those_lambdas} and \eqref{eq:distinct}, we have established an equality in the case where with positive probability, all initial opinions are distinct. 

Next, we relax the  condition for equality a little further. Suppose that $\mu_0$ is an initial distribution  on any number of opinions such that for some $t_0$, $P_{\mu_0} (\eta_{t_0}(u)\ne \eta_{t_0}(u'))>0$ for all $u\neq u'$. Denote the distribution of $\eta_{t_0}$ by $\mu$ and note that $\lambda_\mathrm{V}(G,\mu)=\lambda_\mathrm{CRW}(G)$ follows from \eqref{eq:those_lambdas}, \eqref{eq:distinct}, and \eqref{eq:relaxed_1}. 
For any $\rho \ge 1$, we can use the Markov property to write
\begin{align*}
      E_{\mu_0} [\rho^{\tau} ] &\ge  E_{\mu_0} [\rho^{\tau},\tau>t_0] \\
      & = E_{\mu_0} \left[\left(1-{\bf 1}_{\{\tau\leq t_0\}}\right)\rho^{t_0}  E_{\eta_{t_0}}\left[\rho^{\tau}\right]\right]\\
       & = \rho^{t_0}\Big(E_{\mu_0} \left[E_{\eta_{t_0}}\left[\rho^{\tau}\right]\right]- E_{\mu_0}\left[{\bf 1}_{\{\tau\leq t_0\}}E_{\eta_{t_0}}\left[\rho^\tau\right]\right]\Big)\\
       & \ge  \rho^{t_0}\left(E_{\mu}\left[\rho^ \tau\right]-1\right).
\end{align*}
Hence if $E_{\mu}\left[\rho^ \tau\right]$ is infinite, then so is $E_{\mu_0}\left[\rho^ \tau\right]$. Therefore, it follows from \eqref{eq:variational} that $\lambda_\mathrm{V}(G,\mu_0) \ge \lambda_\mathrm{V}(G,\mu)$. Thus $\lambda_\mathrm{V}(G,\mu_0)=\lambda_\mathrm{CRW}(G)$ and we have proved the next proposition.
\begin{prop}
\label{prop:finally}
Let $G=(V,E)$ be a finite connected graph. 
\begin{enumerate} 
\item For any initial opinion distribution $\mu$
$$ \lambda_\mathrm{V}(G,\mu) \le \lambda_\mathrm{CRW}(G).$$ 
\item An equality holds in each of the following cases: 
\begin{enumerate} 
\item With positive probability, all initial opinions are distinct. 
\item There exists $t\ge 0$, such that for every distinct $u,u'\in V$, $P_{\mu}(\eta_t(u)\ne \eta_t(u'))>0$. 
\end{enumerate} 
\end{enumerate}
\end{prop}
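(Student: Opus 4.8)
The plan is to assemble the three facts established in the development above and line them up against the two parts of the statement; no genuinely new work is needed, only careful bookkeeping with the variational formulas \eqref{eq:variational} and \eqref{eq:CRW_radius}. For part (1), I would argue by stochastic domination alone. Starting from an arbitrary $\mu$, the consensus time $\tau$ is stochastically dominated by $\tau$ started from the all-distinct state ${\bf i}$, under which $\tau$ and $\sigma$ have the same law. Since $\rho^x$ is increasing for $\rho\ge 1$, this gives $E_\mu[\rho^\tau]\le E_{\bf i}[\rho^\tau]=E[\rho^\sigma]$, so the radius-of-convergence identities \eqref{eq:variational} and \eqref{eq:CRW_radius} force $\sup\{\rho:E_\mu[\rho^\tau]<\infty\}\ge\sup\{\rho:E[\rho^\sigma]<\infty\}$, which is precisely $\lambda_\mathrm{V}(G,\mu)\le\lambda_\mathrm{CRW}(G)$ as recorded in \eqref{eq:those_lambdas}.

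For the equality in case (2a), I would run the dual lower bound in reverse. When all initial opinions are distinct with positive probability, the constant $c$ of \eqref{eq:distinct} is strictly positive, and the chain of inequalities culminating in $P_\mu(\tau>t)\ge c\,P(\sigma_{v,v'}>t)$ shows that the geometric decay of $\tau$ under $\mu$ is no faster than that of any single pairwise meeting time $\sigma_{v,v'}$. Maximizing over pairs $v,v'$ and invoking the maximizer identity \eqref{eq:maximizer} yields $\lambda_\mathrm{CRW}(G)\le\lambda_\mathrm{V}(G,\mu)$, which is \eqref{eq:relaxed_1}; combined with part (1) this gives the equality.

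For case (2b), I would reduce to (2a) by pushing the initial law forward by $t$ steps. Writing $\mu'$ for the law of $\eta_t$, the hypothesis says exactly that $\mu'$ meets the condition of (2a), so $\lambda_\mathrm{V}(G,\mu')=\lambda_\mathrm{CRW}(G)$. It then remains to transfer finiteness of $E[\rho^\tau]$ from $\mu'$ back to $\mu$, and here the Markov property enters: conditioning on $\eta_t$ and discarding the contribution of the event $\{\tau\le t\}$ produces $E_\mu[\rho^\tau]\ge\rho^t\big(E_{\mu'}[\rho^\tau]-1\big)$. Hence $E_{\mu'}[\rho^\tau]=\infty$ forces $E_\mu[\rho^\tau]=\infty$, so by \eqref{eq:variational} we get $\lambda_\mathrm{V}(G,\mu)\ge\lambda_\mathrm{V}(G,\mu')=\lambda_\mathrm{CRW}(G)$, and part (1) closes the equality.

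The step I expect to require the most care is the Markov-property estimate in (2b). The innocent-looking correction $-1$ rests on the observation that on $\{\tau\le t\}$ the chain has already reached consensus, so the inner expectation $E_{\eta_t}[\rho^\tau]$ equals $1$ on that event; the discarded term is therefore exactly $P_\mu(\tau\le t)\le 1$, which is what licenses bounding it by the constant $1$. Identifying this term correctly, and keeping the conditioning clean, is the only nonroutine point, since everything else follows from the variational characterizations and domination facts already in hand.
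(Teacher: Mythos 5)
Your proposal is correct and follows essentially the same route as the paper: part (1) via stochastic domination by the all-distinct initial state together with the radius-of-convergence identities, case (2a) via the lower bound $P_\mu(\tau>t)\ge c\,P(\sigma_{v,v'}>t)$ combined with \eqref{eq:maximizer}, and case (2b) via the same Markov-property estimate $E_{\mu_0}[\rho^\tau]\ge\rho^{t_0}\bigl(E_{\mu}[\rho^\tau]-1\bigr)$. Your reading of the ``$-1$'' correction (the inner expectation being $1$ on $\{\tau\le t_0\}$ since the chain is already absorbed) is exactly the justification implicit in the paper's displayed computation.
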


An important observation is that we can always attain an equality with only two opinions. 
\begin{cor}
Let $G=(V,E)$ be a finite connected graph and suppose that $\mu$ is the initial distribution on the opinions $\{0,1\}$ where all vertices have opinion $0$ except for one uniformly chosen vertex which has opinion $1$. Then 
$$\lambda_\mathrm{V}(G,\mu) = \lambda_\mathrm{CRW}(G).$$ 
\end{cor}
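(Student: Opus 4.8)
The plan is to invoke Proposition~\ref{prop:finally}, whose part~(2) records two sufficient conditions for the equality $\lambda_\mathrm{V}(G,\mu)=\lambda_\mathrm{CRW}(G)$. I would first note that condition~(2a)---all initial opinions distinct with positive probability---is not available in general here: whenever $|V|\ge 3$ the measure $\mu$ uses only the two opinions $\{0,1\}$, so the initial coloring can never be injective and at least two vertices must agree. Hence the argument should instead run through the relaxed condition~(2b), which only asks for a single time $t\ge 0$ at which every pair of distinct vertices has a positive chance of disagreeing.

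The key observation is that the choice $t=0$ already suffices, so that no dynamics of the voter model need to be analyzed at all. Under $\mu$ the initial coloring $\eta_0$ assigns opinion $1$ to a single uniformly chosen vertex $W$ and opinion $0$ to every other vertex. Fixing distinct $u,u'\in V$, the values $\eta_0(u)$ and $\eta_0(u')$ differ exactly when precisely one of $u,u'$ coincides with $W$; since $W$ is uniform on $V$ and $u\neq u'$, these two possibilities are disjoint and
$$P_\mu\big(\eta_0(u)\neq \eta_0(u')\big)=P(W=u)+P(W=u')=\frac{2}{|V|}>0.$$

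Because this holds for every distinct pair $u,u'$, the hypothesis of Proposition~\ref{prop:finally}(2b) is met with $t=0$, and the claimed equality follows immediately. I do not expect any genuine obstacle: the entire content is carried by Proposition~\ref{prop:finally}, and the only points requiring attention are the decision to route the argument through condition~(2b) rather than (2a), together with the elementary recognition that the initial time $t=0$ is already a valid witness for (2b) when the unique dissenting vertex is placed uniformly at random.
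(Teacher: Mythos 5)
Your proof is correct and follows the route the paper intends: the corollary is stated immediately after Proposition \ref{prop:finally} precisely because condition (2b) with $t=0$ applies, since for any distinct $u,u'$ the uniformly placed dissenter gives $P_\mu\big(\eta_0(u)\neq\eta_0(u')\big)=2/|V|>0$. Your additional remark that (2a) fails for $|V|\ge 3$ correctly identifies why the relaxed condition (2b) is the one needed, which is exactly the point of the corollary.
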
 

\begin{xmpl}\label{ex:complete_graph}
This case was treated in \cite[Section 5]{dickman} for the continuous-time analogue. 
Let $K_n$ be the complete graph with $n$ vertices and consider the voter model on $K_n$ for $n\ge 3$ with two opinions: ``yes'' and ``no''. This Markov chain, when restricted to the nonabsorbing states, is irreducible and aperiodic. Hence it follows from Theorem \ref{thm:perron} that the chain conditioned on nonabsorption converges to the unique QSD. Writing down the eigenvalue equation from Proposition \ref{prop:QSD}, we have 
\begin{align*} \lambda\, \nu_n (k) &=  \frac{k(k-1)+(n-k)(n-k-1)}{n(n-1)}\nu_n (k)\\
&\quad + \frac{(k+1)(n-k-1)}{n(n-1)}\nu_n (k+1) + \frac{(n-k+1)(k-1)}{n(n-1)}\nu_n (k-1),
\end{align*} 
where $k$ represents the number of ``yes'' opinions, and $\nu_n(0)=\nu_n(n)=0$. 
Letting  $A=k$ and $B=n-k$, the numerators on the right-hand side are equal to $A^2 -A +B^2 -B$, $AB+B-A-1$, $AB+A-B-1$, and adding them up gives us $A^2+B^2 +2AB-(A+B)-2=n(n-1)-2$. Therefore, the equation is solved by choosing  $\nu_n(k) = \frac{1}{n-1}$ and $\lambda= \frac{n(n-1)-2}{n(n-1)} = 1- \frac{2}{n(n-1)}$. Proposition \ref{prop:finally}-2b gives  $\lambda_\mathrm{V}(K_n,\mu)=\lambda_\mathrm{CRW}(K_n)$, whenever $\mu$ is not supported on consensus states, due to the above claimed  irreducibility. Given two  walkers at different vertices, the probability that they meet in the next step is $\frac{2}{n(n-1)}$, so it follows from  \eqref{eq:maximizer} that $\lambda_\mathrm{CRW}(K_n) = 1 - \frac{2}{n(n-1)}$, as established above through a direct calculation of the QSD.
\end{xmpl} 
Summarizing, the QSD for the voter model on the complete graph $K_n$ is uniform on the nonabsorbing states. In particular, the QSDs do not converge to a probability distribution as $n\to\infty$. As we will see in Section \ref{sec:main}, the situation is more interesting for the complete bipartite graphs $K_{n,m}$. 

\section{Voter model on Complete Bipartite Graphs}\label{sec:main}
Let $K_{n,m}=(V,E)$ be the bipartite graph whose vertex $V$ set is the disjoint union of $L$ and $S$, where $|L|=n$, $|S|=m$, $m\le n$, and its edge set is $E=\{\{l,s\}:l\in L, s\in S\}$. Though these graphs are quite simple, an interesting structure appears when considering the behavior of the voter model conditioned on not reaching consensus for a long time.

We will study QSDs for the voter model on $K_{n,m}$ with two opinions, ``0'' and ``1'', also referred to as ``no'' and ``yes'', respectively. As noted before, we assume $m\le n$. We will also impose the following additional constraints which we need in order to guarantee irreducibility:  
\begin{equation} 
\label{eq:bipart_assum}
\begin{split} 
m&\ge 2 \mbox{ or }\\
m=1 &\mbox{ and } n\ge 3. 
\end{split} 
\end{equation}

The set $\Delta$ of absorbing states for the voter model on $K_{n,m}$ is given by 
$$\Delta = \{\eta: \eta\equiv 0\mbox{ or } \eta \equiv 1\}.$$ 
In addition, the states in 
$$BP:=\{\eta:\eta(l)=1-\eta(s),l \in L,s\in S\}$$ 
are not accessible from any state not in $BP$. We will therefore eliminate the subsets $\Delta$ and $BP$ from our state space for the model. A routine argument shows that under \eqref{eq:bipart_assum}, the $2$-opinion voter model is now irreducible and aperiodic. Hence it follows from Theorem \ref{thm:perron} and Proposition \ref{prop:finally} that starting from any initial distribution $\mu$ supported on $(\Delta \cup BP)^c$ and conditioning on not reaching consensus, the model converges to the unique QSD which is also supported on $(\Delta \cup BP)^c$. We denote this QSD by $\pi_{n,m}$ and note that it is a left eigenvector corresponding to the eigenvalue $\lambda_{n,m} =\lambda_\mathrm{V}(K_{n,m},\mu)= \lambda_\mathrm{CRW}(K_{n,m})$ for the restriction of the transition function of the voter model to $(\Delta \cup BP)^c$.

To continue our analysis, we will exploit the symmetry among vertices within each group. Instead of following the opinion on each vertex, we will follow the number of ``yes'' opinions in each of the groups $S$ and $L$. This leads to a Markov chain on the state space  $\{0,\dots,n\}\times\{0,\dots,m\}$. Each state is an ordered pair $(k,h)$, with $k$ representing the number of ``yes'' in group $L$ and $h$ representing the number of ``yes'' in $S$. Observe that the only allowed transitions are the following: 
\begin{enumerate}
    \item $(k,h) \to (k+1,h)$.   This happens if a ``no'' vertex in $L$ is sampled and adopts a ``yes'' from $S$. The probability of such a transition is therefore 
    $\frac{n-k}{n+m}\frac{h}{m}$. 
     \item $(k,h)\to(k-1,h)$.  This happens if a ``yes'' vertex in $L$ is sampled and adopts a ``no'' from $S$. The probability is therefore  $\frac{k}{n+m}\frac{m-h}{m}$.
\item $(k,h)\to(k,h+1)$. This is item 1. above with the roles of $L$ and $S$ interchanged and hence occurs with probability 
    $\frac{m-h}{n+m}\frac{k}{n}$. 
    \item $(k,h)\to(k,h-1)$. This is item 2. above with the roles of $L$ and $S$ interchanged. Similarly, this happens with probability 
    $\frac{h}{n+m}\frac{n-k}{n}$. 
    \item $(k,h)\to(k,h)$. This happens with probability  $\frac{k}{n+m}\frac{h}{m}+\frac{h}{n+m}\frac{k}{n}+\frac{n-k}{n+m}\frac{m-h}{m}+\frac{m-h}{n+m}\frac{n-k}{n}=\frac{kh+(n-k)(m-h)}{nm}$.
\end{enumerate}
Of course,  $(0,0),(n,m)$ are the unique  absorbing states, and the set $BP$ collapses into two states,  $(0,m)$ and $(n,0)$, not accessible from any other state. Thus eliminating these four states, the chain is irreducible. As a result, it possesses a unique QSD which we denote by $\mu_{n,m}$. Recall that $\lambda_{n,m} = \lambda_\mathrm{CRW}(K_{n,m})$ coincides with the geometric rate of the time to absorption in the voter model. The absorption time for the new chain  from any initial state coincides with the time to absorption for the voter model starting from any state with matching numbers of opinions in both $S$ and $L$, therefore, it follows from Proposition \ref{prop:QSD} that the eigenvalue corresponding to $\mu_{n,m}$ is equal to $\lambda_{n,m}$. Now fix any state $(k,h)$ for our new chain.   By looking at all possible transitions we obtain the following equation for $\mu_{n,m}$: 
\begin{equation} 
\label{eq:stationary_bipart}
\begin{split}
    \lambda_{n,m} \,\mu_{n,m}(k,h)&=\mu_{n,m}(k,h)\frac{kh+(n-k)(m-h)}{nm}\\&+\mu_{n,m}(k-1,h)\frac{(n-k+1)h}{m(n+m)}+\mu_{n,m}(k+1,h)\frac{(k+1)(m-h)}{m(n+m)}\\
    &+\mu_{n,m}(k,h-1)\frac{(m-h+1)k}{n(n+m)}+\mu_{n,m}(k,h+1)\frac{(h+1)(n-k)}{n(n+m)}.
\end{split}
\end{equation}

In order to extract more information, we first compute $\lambda_{n,m}$. 
\begin{prop}\label{pr:lambda_bipart}
 \begin{equation*} 
\begin{split}
\lambda_{n,m}=\lambda_\mathrm{CRW}(K_{n,m}) &= 
   1 - \frac{2}{n+m}\left(1-\sqrt{1-\frac{1}{2n}-\frac{1}{2m}}\right)\\
   &=1-\frac{\gamma_{n,m}}{n+m},
\end{split}
\end{equation*} 
where $$ \gamma_{n,m} = 2 \left(1-\sqrt{1-\frac{1}{2n}-\frac{1}{2m}}\right).$$
\end{prop}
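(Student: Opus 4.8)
The plan is to compute $\lambda_\mathrm{CRW}(K_{n,m})$ directly from \eqref{eq:maximizer}, which identifies it as the common exponential decay rate of the tails $P(\sigma_{v,v'}>n)$ of the meeting time of two coalescing walkers. Since at each step a single vertex is selected and only walkers sitting at it can move, two walkers occupying distinct vertices never move simultaneously, so their meeting time is the absorption time of a Markov chain whose state records only the \emph{type} of the unordered pair of positions. By the symmetry among vertices within each partition there are just three non-absorbed types: both walkers in $L$ (type $LL$), both in $S$ (type $SS$), and one in each (type $LS$); coalescence is the unique absorbing state.

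First I would write down the one-step transitions of this reduced chain by inspecting which choice of selected vertex and neighbor produces each move. A selected walker in $L$ always jumps to $S$ and vice versa, which gives, with $p=\tfrac{1}{n+m}$,
\begin{align*}
LL &\to LS \text{ w.p. } 2p, & SS &\to LS \text{ w.p. } 2p,\\
LS &\to LL \text{ w.p. } p\tfrac{n-1}{n}, & LS &\to SS \text{ w.p. } p\tfrac{m-1}{m},
\end{align*}
together with coalescence from $LS$ with probability $p\left(\tfrac1n+\tfrac1m\right)$ (the walker that moves must land exactly on the other one) and the complementary holding probabilities on the diagonal. The key observation is that types $LL$ and $SS$ have identical transition probabilities into each of the lumps $\{LL,SS\}$, $\{LS\}$, $\{\text{coalesced}\}$; the chain is therefore strongly lumpable, and I would merge $LL$ and $SS$ into a single state $\textsf{SAME}$, renaming $LS$ as $\textsf{DIFF}$. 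This also transparently covers the degenerate case $m=1$, where the type $SS$ does not exist but the lumped transition probabilities are unchanged.

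Next I would assemble the resulting substochastic transition matrix on the two transient states $\{\textsf{SAME},\textsf{DIFF}\}$, namely
$$ \widetilde Q = \begin{pmatrix} 1-2p & 2p \\[2pt] p\left(2-\tfrac1n-\tfrac1m\right) & 1-2p \end{pmatrix}, \qquad p=\frac{1}{n+m}. $$
Because $\widetilde Q$ is irreducible with strictly positive diagonal it is primitive, so by Perron--Frobenius the tail products $\mu\widetilde Q^n\mathbf 1$ decay at the rate of its top eigenvalue regardless of the starting type; hence, via \eqref{eq:maximizer}, $\lambda_\mathrm{CRW}(K_{n,m})$ equals the spectral radius of $\widetilde Q$. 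Its eigenvalues are $1-2p\pm\sqrt{2p^2\left(2-\tfrac1n-\tfrac1m\right)}$, and taking the larger root gives
$$ \lambda_\mathrm{CRW}(K_{n,m}) = 1-\frac{2}{n+m}+\frac{1}{n+m}\sqrt{2\left(2-\tfrac1n-\tfrac1m\right)}. $$
Finally I would simplify the radical using $2\left(2-\tfrac1n-\tfrac1m\right)=4\left(1-\tfrac{1}{2n}-\tfrac{1}{2m}\right)$ to reach the stated expression, and recall from the discussion preceding the proposition that $\lambda_{n,m}=\lambda_\mathrm{CRW}(K_{n,m})$.

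I expect the only genuine obstacle to be bookkeeping: correctly enumerating the transitions out of type $LS$, in particular recognizing that coalescence requires the moving walker to land precisely on its partner (probability $\tfrac1n$ or $\tfrac1m$ depending on which walker moves) rather than merely to return to the partner's side. Everything after the reduction is a $2\times2$ eigenvalue computation.
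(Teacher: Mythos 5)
Your proposal is correct and follows essentially the same route as the paper: both reduce via \eqref{eq:maximizer} to two coalescing walkers, classify the pair by type ($LL$, $SS$, $LS$), merge $LL$ and $SS$ by the same lumpability observation (the paper writes the $3\times 3$ matrix first and then consolidates, also disposing of $m=1$ by deleting the $SS$ row and column), and extract the top eigenvalue of the resulting $2\times 2$ matrix, which matches your $\widetilde Q$ exactly. The transition probabilities, the coalescence bookkeeping from state $LS$, and the final simplification of the radical all agree with the paper's computation.
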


\begin{proof} 
We assume first $m>1$. From \eqref{eq:maximizer} it is enough to consider only two coalescing random walks on $K_{n,m}$. The two CRW paths can  be in either one of the following states: both walks are in different vertices of  $L$, both walks are in different vertices of  $S$, one walk is in $S$ and another in $L$, or they are both at the same vertex.  Label these four states of the system as $1,2,3,4$,  respectively. Of course, $4$ is the absorbing state for the CRW, so we will omit it from our calculations. From each of the states $1,2,3$, the system will stay put with   probability $1-\frac{2}{n+m}$. From state $1$ the system can transition to  $3$ with probability $\frac{2}{n+m}$, and  similarly,  from state $2$ the system can  transition to state $3$ with probability $\frac{2}{n+m}$. Finally, from state $3$, the system can transition to $1$ or to $2$ with respective probabilities $\frac{1}{n+m}\frac{n-1}{n}$ and $\frac{1}{n+m}\frac{m-1}{m}$. As a result the substochastic transition function on states $1,2,3$ is 
\begin{equation} 
\label{eq:walk_matrix}
\begin{pmatrix}
\frac{n+m-2}{n+m} & 0 & \frac{2}{n+m}\\ 
0& \frac{n+m-2}{n+m} & \frac{2}{n+m}\\
\frac{1}{n+m}\frac{n-1}{n} & \frac{1}{n+m}\frac{m-1}{m} & \frac{n+m-2}{n+m}
\end{pmatrix}.
\end{equation}
Since from both states $1$ and $2$ the transitions are either to themselves with the same probability, or to $3$ with the complementary probability, we can consolidate these states into one, leading to the matrix 
\begin{equation}
\label{eq:reduced_matrix} 
\begin{pmatrix}
 \frac{n+m-2}{n+m} & \frac{2}{n+m} \\
  \frac{1}{n+m}\left(\frac{n-1}{n}+\frac{m-1}{m}\right) & \frac{n+m-2}{n+m}
\end{pmatrix} .
\end{equation} 
The characteristic equation is 
$$\left(\lambda - \frac{n+m-2}{n+m}\right)^2- \frac{2}{(n+m)^2}\frac{2mn-m-n}{nm}=0.$$
Therefore the two eigenvalues, $\lambda_+$ and $\lambda_-$, are given by 
\begin{align*}  \lambda_\pm &= 1-\frac{2}{n+m}\pm \frac{1}{n+m}\sqrt{4-\frac{2}{n}-\frac{2}{m}}\\
& = 1 - \frac{2}{n+m} \left ( 1 \pm  \sqrt{1 - \frac{1}{2n}-\frac{1}{2m}}\right),
\end{align*} 
and the largest eigenvalue is obtained by choosing $\lambda_{-}$ (using the ``$-$'' sign),  giving the expression in the statement. 

It remains to consider the case $m=1$. In this case, state $2$ is not possible. We therefore eliminate the second row and the second column from \eqref{eq:walk_matrix}, ending up with the matrix \eqref{eq:reduced_matrix} and then continue as before. 
\end{proof} 

The next result gives a more direct connection between $\lambda_{n,m}$ and $\mu_{n,m}$. 
\begin{prop}
\label{pr:lambda_mu_connection}
\begin{equation*} 
\lambda_{n,m}= 1- \frac{2}{n+m}\big(\mu_{n,m}(1,0) +  \mu_{n,m}(0,1)\big)
\end{equation*} 
\end{prop}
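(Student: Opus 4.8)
The plan is to read $\lambda_{n,m}$ off the global balance obtained by summing the eigenvalue equation, and then to collapse the resulting boundary terms using the opinion-reversal symmetry. First I would recall that $\mu_{n,m}$ is a left eigenvector of the substochastic matrix $S$ (the restriction of the voter-model transition function to $\Omega=(\Delta\cup BP)^c$) with eigenvalue $\lambda_{n,m}$, so that $\mu_{n,m}S=\lambda_{n,m}\mu_{n,m}$. Pairing this identity with the all-ones vector on $\Omega$---equivalently, summing \eqref{eq:stationary_bipart} over every state $(k,h)$ and using $\sum_{k,h}\mu_{n,m}(k,h)=1$---gives
$$\lambda_{n,m}=\sum_{i\in\Omega}\mu_{n,m}(i)\big(1-q(i)\big)=1-\sum_{i\in\Omega}\mu_{n,m}(i)\,q(i),$$
where $q(i)=1-\sum_{j\in\Omega}S(i,j)$ is the one-step probability of leaving $\Omega$, i.e. of being absorbed into $\Delta\cup BP$.

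The heart of the argument is to determine exactly which states have $q(i)>0$ and the common value of $q$ there. Since the states of $BP$ are inaccessible in one step from $\Omega$ (as already observed), only transitions into $\Delta=\{(0,0),(n,m)\}$ contribute. Inspecting the transition probabilities in items 1--5, the only states from which $(0,0)$ is reachable in one step are $(1,0)$ and $(0,1)$, and the only states from which $(n,m)$ is reachable are $(n-1,m)$ and $(n,m-1)$; in each case a direct substitution shows the escape probability equals exactly $\frac{1}{n+m}$, since all competing transition terms vanish at the boundary. Hence
$$\lambda_{n,m}=1-\frac{1}{n+m}\Big(\mu_{n,m}(1,0)+\mu_{n,m}(0,1)+\mu_{n,m}(n-1,m)+\mu_{n,m}(n,m-1)\Big).$$

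Finally I would invoke the symmetry of the voter model under interchanging the two opinions. This involution acts on the reduced chain by $(k,h)\mapsto(n-k,m-h)$ and leaves the transition probabilities in items 1--5 invariant; since $\Delta\cup BP$ is preserved, the pushforward of $\mu_{n,m}$ under this map is again a QSD, and uniqueness (Theorem \ref{thm:perron}, via irreducibility on $\Omega$) forces $\mu_{n,m}(k,h)=\mu_{n,m}(n-k,m-h)$. In particular $\mu_{n,m}(1,0)=\mu_{n,m}(n-1,m)$ and $\mu_{n,m}(0,1)=\mu_{n,m}(n,m-1)$, which collapses the four-term sum into $\frac{2}{n+m}\big(\mu_{n,m}(1,0)+\mu_{n,m}(0,1)\big)$ and yields the claimed formula.

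The main obstacle I anticipate is the bookkeeping in the middle step: one must verify rigorously that no state other than the four listed escapes $\Omega$, and in particular that the $BP$ states are neither reached nor counted as absorbing contributions. The boundary case $m=1$ also needs a word, since then $(0,1)$ and $(n,0)$ themselves belong to $BP$ and are removed from $\Omega$; there $\mu_{n,m}(0,1)=0$ by convention and the two surviving escape states $(1,0)$ and $(n-1,1)$ still carry equal mass by symmetry, so the stated identity continues to hold. This mirrors the separate treatment of $m=1$ in the proof of Proposition \ref{pr:lambda_bipart}.
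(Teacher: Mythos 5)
Your proof is correct. It shares the paper's skeleton --- pair the left-eigenvector identity $\mu_{n,m}S=\lambda_{n,m}\mu_{n,m}$ with the all-ones vector, then use the opinion-swap symmetry $\mu_{n,m}(k,h)=\mu_{n,m}(n-k,m-h)$ to merge the four boundary terms into two --- but your middle step is organized differently, and more transparently. The paper sums \eqref{eq:stationary_bipart} over all states and expands the result into expectations $E[KH]$, $E[(n-K)(m-H)]$, $E[(n-K)H]$, $E[K(m-H)]$ of a vector $(K,H)\sim\mu_{n,m}$, letting the algebra cancel everything except the four boundary masses; the factor $\frac{1}{n+m}$ only emerges at the end of that computation. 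You instead read the row sums probabilistically, $\sum_{j\in\Omega}S(i,j)=1-q(i)$ with $q(i)$ the one-step absorption probability, so that $\lambda_{n,m}=1-\sum_{i}\mu_{n,m}(i)\,q(i)$, and then verify that $q$ is supported exactly on $(1,0),(0,1),(n-1,m),(n,m-1)$ with common value $\frac{1}{n+m}$ (using, correctly, that $BP$ is unreachable from $\Omega$ in one step, so only $\Delta$ contributes). This buys conceptual clarity: the deficit $1-\lambda_{n,m}$ is exactly the QSD mass sitting one step from consensus times the uniform one-step absorption rate $\frac{1}{n+m}$, a fact the paper's expectation algebra obscures. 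Two further points in your favor: you justify the relabeling symmetry by pushing $\mu_{n,m}$ forward under the involution and invoking uniqueness of the QSD, which the paper asserts without comment; and you treat $m=1$ (where $(0,1)$ and $(n,0)$ lie in $BP$ and carry zero mass) explicitly, whereas the paper's proof covers that case only implicitly because the corresponding terms vanish.
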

\begin{proof}
Let $(K,H)$ be a random vector representing the number of ``yes'' in $L$ and $S$, respectively, whose distribution is $\mu_{n,m}$. Recalling that $\mu_{n,m}(0,0)=\mu_{n,m}(n,m)=0$, we can sum both sides of \eqref{eq:stationary_bipart} over $-1\leq k\leq n+1$ and $-1\leq h\leq m+1$ while eliminating from the sum the pairs $(k,h)=(0,0)$ and $(k,h)=(n,m)$ to obtain 
\begin{align*}
\lambda_{n,m} =&\frac{1}{nm} \big(E[KH]+E[(n-K)(m-H)] \big)\\
& + \frac{1}{m(n+m)} \big(E[(n-K)H]-\mu_{n,m}(n-1,m)m\big)\\
&+ \frac{1}{m(n+m)}\big(E[K(m-H)]-\mu_{n,m}(1,0)m\big)\\ 
& + \frac{1}{n(n+m)}\big(E[(m-H) K]-\mu_{n,m}(n,m-1)n\big)\\ 
&+ \frac{1}{n(n+m)}\big(E[H(n-K)]-\mu_{n,m}(0,1)n\big)\\
=&1-\frac{1}{n+m}\big(\mu_{n,m}(n-1,m)+\mu_{n,m}(1,0)+\mu_{n,m}(n,m-1)+\mu_{n,m}(0,1)\big)
\\
= &1- \frac{2}{n+m}\big(\mu_{n,m}(1,0) +  \mu_{n,m}(0,1)\big)
\end{align*}
where the last equality follows from invariance under relabeling of the two opinions.
\end{proof}

We now show how to use this proposition to compute $\mu_{n,1}(j,0)$. As the state $(0,1)$ is in $BP$ and not in the support of $\mu_{n,1}$, using Proposition \ref{pr:lambda_mu_connection} along with Proposition \ref{pr:lambda_bipart} gives 
\begin{equation}\label{eq:rec_initial}
\begin{split}
\mu_{n,1} (1,0)=\frac{n+1}{2} (1-\lambda_{n,1})&= 1- \sqrt{\frac{1}{2}-\frac{1}{2n}}\\
&=\frac{\gamma_{n,1}}{2}.
\end{split}
\end{equation}
Writing out \eqref{eq:stationary_bipart} for this case leaves us with 
$$\left(\lambda_{n,1}-\frac{n-k}{n}\right)\mu_{n,1}(k,0) = \mu_{n,1}(k+1,0)\frac{k+1}{n+1}+ \mu_{n,1}(k,1)\frac{n-k}{n(n+1)}.$$ 
Due to invariance under relabeling of the two opinions, we have $\mu_{n,1}(k,1) = \mu_{n,1}(n-k,0)$. In addition, from Proposition \ref{pr:lambda_bipart} we have
$$\lambda_{n,1} - \frac{n-k}{n}= \frac{k}{n}- \frac{\gamma_{n,1}}{n+1}.$$
We therefore obtain the equation 
\begin{equation} 
\label{eq:star_QSD} \left( \frac{k}{n}-\frac{\gamma_{n,1}}{n+1}\right)\mu_{n,1}(k,0)= \mu_{n,1}(k+1,0)\frac{k+1}{n+1} + \mu_{n,1}(n-k,0) \frac{n-k}{n(n+1)}.
\end{equation}
This nonlocal recurrence relation can be solved through iteration. Having calculated $\mu_{n,1}(1,0)$ in \eqref{eq:rec_initial} and recalling that $\mu_{n,1}(n,0)=0$, we can plug $k=n-1$ into \eqref{eq:star_QSD} and obtain  
$$ \left(\frac{n-1}{n} - \frac{\gamma_{n,1}}{n+1}\right)  \mu_{n,1}(n-1,0) = \mu_{n,1}(1,0)\frac{1}{n(n+1)},$$
or 
$$\mu_{n,1}(n-1,0)  = \frac{\gamma_{n,1}}{2(n^2-n \gamma_{n,1}-1)}.$$
We can repeat this procedure inductively. Having calculated $\mu_{n,1}(j,0)$ and $\mu_{n,1}(n-j,0)$ for $j=1,\dots,k<n$, we can use \eqref{eq:star_QSD} to recover $\mu_{n,1}(k+1,0)$ and then $\mu_{n,1}(n-k-1,0)$.

Next we prove a technical lemma which gives the asymptotic behavior of $\mu_{n,m}$ as $n\to\infty$. 
\begin{lem}
\label{lem:bold_center}
Suppose $k\ge 0$ and $h>0$. Then 
$$ \lim_{n\to\infty}\mu_{n,m}(k,h)=0.$$ 
\end{lem}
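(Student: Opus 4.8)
The plan is to prove the statement by induction on $k$, holding $h\in\{1,\dots,m\}$ fixed, directly from the stationarity recurrence \eqref{eq:stationary_bipart}. The only input I need from earlier is that $\lambda_{n,m}\to 1$ as $n\to\infty$, which is immediate from Proposition \ref{pr:lambda_bipart} since $1-\lambda_{n,m}=\gamma_{n,m}/(n+m)$ and $\gamma_{n,m}\to\gamma_m<\infty$. Solving \eqref{eq:stationary_bipart} for $\mu_{n,m}(k,h)$ gives
$$\mu_{n,m}(k,h)=\frac{1}{\lambda_{n,m}-\frac{kh+(n-k)(m-h)}{nm}}\left[\frac{(n-k+1)h}{m(n+m)}\,\mu_{n,m}(k-1,h)+R_n(k,h)\right],$$
where $R_n(k,h)$ collects the three remaining terms, namely those carrying $\mu_{n,m}(k+1,h)$, $\mu_{n,m}(k,h-1)$, and $\mu_{n,m}(k,h+1)$. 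The crucial bookkeeping is to track the order of magnitude of each coefficient for fixed $k,h$ as $n\to\infty$: the diagonal expression $\frac{kh+(n-k)(m-h)}{nm}\to\frac{m-h}{m}$, so the denominator converges to $\frac{h}{m}$, which is strictly positive precisely because $h>0$; the coefficient multiplying $\mu_{n,m}(k-1,h)$ converges to $\frac{h}{m}$; and all three coefficients hidden in $R_n(k,h)$ are $O(1/n)$. Since every $\mu_{n,m}$-value lies in $[0,1]$, this shows $R_n(k,h)\to 0$ and that the prefactor stays bounded (by $2m/h$, say, for $n$ large).

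First I would treat the base case $k=0$, where the displayed identity holds verbatim except that the $\mu_{n,m}(k-1,h)$ term is absent, there being no state with negative first coordinate. For $0<h<m$ the bracket then reduces to $R_n(0,h)$, all of whose coefficients are $O(1/n)$, so the bounded prefactor forces $\mu_{n,m}(0,h)\to 0$; for $h=m$ the state $(0,m)$ lies in $BP$ and has been removed, whence $\mu_{n,m}(0,m)\equiv 0$. Either way the base case holds. For the inductive step, assume $\mu_{n,m}(k-1,h)\to 0$. In the displayed identity the first bracketed term is a bounded coefficient times $\mu_{n,m}(k-1,h)$, hence tends to $0$, while $R_n(k,h)\to 0$ as noted; multiplying by the bounded prefactor gives $\mu_{n,m}(k,h)\to 0$, completing the induction. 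Note that the induction only propagates to the right in $k$, which is why knowing nothing about $\mu_{n,m}(k+1,h)$ beyond its boundedness suffices.

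The one point that requires care -- and the only place the hypothesis $h>0$ enters -- is the claim that the denominator $\lambda_{n,m}-\frac{kh+(n-k)(m-h)}{nm}$ stays bounded away from $0$; this is exactly where the argument breaks for $h=0$, since then the denominator tends to $0$ and the recurrence no longer controls $\mu_{n,m}(k,0)$ through its left neighbor. I expect the main (though modest) obstacle to be organizing the error bookkeeping cleanly: one must verify that the \emph{single} non-vanishing source coefficient is the one attached to $\mu_{n,m}(k-1,h)$, so that the induction hypothesis is precisely what kills it, while every other incoming contribution is genuinely $O(1/n)$ and dies on its own. Once this is checked at the boundary $k=0$ and for a generic interior $k$, the finite induction closes for each fixed $k$, yielding $\lim_{n\to\infty}\mu_{n,m}(k,h)=0$ for all $k\ge 0$ and $h>0$.
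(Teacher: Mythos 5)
Your proof is correct, and although it starts from the same ingredients as the paper's --- the stationarity relation \eqref{eq:stationary_bipart}, the fact $\lambda_{n,m}\to1$ from Proposition \ref{pr:lambda_bipart}, and the coefficient bookkeeping showing that for fixed $k$ and $h>0$ the diagonal gap and the left-neighbor coefficient both tend to $h/m>0$ while every other incoming coefficient is $O(1/n)$ --- it closes the argument by a genuinely different mechanism. The paper never anchors an induction: from its asymptotic relation \eqref{eq:asymptotic} it argues by contradiction, propagating a hypothetical positive $\limsup$ along a subsequence both forwards and backwards in $k$, and then invoking normalization to conclude that $\sum_{k}\mu_{n_j,m}(k,h')$ would exceed $1$. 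You instead exploit the boundary: at $k=0$ the source term $\mu_{n,m}(k-1,h)$ is simply absent (and $(0,m)\in BP$ carries no mass), so $\mu_{n,m}(0,h)=O(1/n)$, after which a direct forward induction in $k$ finishes the job; as you note, only boundedness of $\mu_{n,m}(k+1,h)$ is needed, since its coefficient vanishes on its own. Your route buys something concrete: it is constructive, avoids subsequences, and in fact delivers the quantitative rate $\mu_{n,m}(k,h)=O(1/n)$ for each fixed $(k,h)$ with $h>0$, which is strictly more than the paper's qualitative conclusion. What the paper's route buys is that it requires no boundary analysis whatsoever --- the contradiction comes purely from the fact that a probability vector cannot have infinitely many entries bounded below --- so it is robust to how the recurrence behaves at $k=0$. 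Both arguments correctly isolate $h>0$ as what keeps the denominator $\lambda_{n,m}-\frac{kh+(n-k)(m-h)}{nm}$ bounded away from zero, and your remark that this is precisely where the argument must break for $h=0$ is apt, since $\mu_{n,m}(k,0)$ has the nontrivial limit computed in Proposition \ref{prop:gamma_m}.
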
 
\begin{proof}
Assume  $k,h>0$.  From \eqref{eq:stationary_bipart} we have
$$ \left(\lambda_{n,m} - \frac{kh+(n-k)(m-h)}{nm}\right)\mu_{n,m}(k,h) = \big(1+o(1)\big) \frac{h}{m}\mu_{n,m}(k-1,h) +O\left(\frac{1}{n}\right)$$ 
as $n\to\infty$. From Proposition \ref{pr:lambda_bipart}, we know that $\lambda_{n,m}\to 1$ as $n\to\infty$. Hence 
\begin{equation}\label{eq:asymptotic}
\mu_{n,m}(k,h)=\big(1+o(1)\big)\mu_{n,m}(k-1,h)+O\left(\frac 1n\right)\text{ as }n\to\infty.
\end{equation} 

Now suppose for the sake of a contradiction that $\limsup_{n\to\infty} \mu_{n,m}(k',h')=\epsilon>0$ for some $k'\geq 0$ and $h'>0$. Then \eqref{eq:asymptotic} implies that $\lim_{j\to\infty}\mu_{n_j,m}(k'+1,h')=\epsilon$ along some subsequence $n_1,n_2,\dots$. Similarly, \eqref{eq:asymptotic} can be used again to show that $\lim_{j\to\infty}\mu_{n_j,m}(k'+2,h')=\epsilon$ along the same subsequence. Reasoning inductively both forwards and backwards in $k$, it follows that $\lim_{j\to\infty}\mu_{n_j,m}(k,h')=\epsilon$ for all $k\geq 0$. In particular, for $j$ large enough we have $\mu_{n_j,m}(k,h')>\epsilon/2$ for $0\leq k\leq \lceil 2/\epsilon\rceil$. Hence $\sum_{k}\mu_{n_j,m}(k,h')>1$, a contradiction.
\end{proof} 

In the following proposition we calculate the pointwise limit of $\mu_{n,m}$ as the size of the large partition tends to infinity. 
\begin{prop}
\label{prop:gamma_m}
Let $f_{\gamma_m}$ be as in \eqref{eq:fgamma} with $\gamma_m$ defined in Theorem \ref{th:main}. Then for $k\geq 0$
$$\mu_{\infty,m}(k,0)  := \lim_{n\to\infty}\mu_{n,m}(k,0) =\frac 12 f_{\gamma_m}(k).$$
\end{prop}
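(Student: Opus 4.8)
The plan is to specialize the eigenvalue equation \eqref{eq:stationary_bipart} to the slice $h=0$, substitute $\lambda_{n,m}=1-\gamma_{n,m}/(n+m)$ from Proposition \ref{pr:lambda_bipart}, rescale by $n+m$, and pass to the limit, using Lemma \ref{lem:bold_center} to kill the contributions coming from $h>0$. Setting $h=0$ in \eqref{eq:stationary_bipart}, the two terms carrying a factor $h$ or the out-of-range value $\mu_{n,m}(k,-1)=0$ drop out, and after substituting for $\lambda_{n,m}$ and clearing the $n+m$ denominator one is left with
\begin{equation*}
\left(\frac{k(n+m)}{n}-\gamma_{n,m}\right)\mu_{n,m}(k,0)=(k+1)\,\mu_{n,m}(k+1,0)+\frac{n-k}{n}\,\mu_{n,m}(k,1),
\end{equation*}
valid for every $k\ge 1$; this is the general-$m$ analogue of \eqref{eq:star_QSD} and is the engine of the argument.

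Next I would establish the base case $\lim_{n\to\infty}\mu_{n,m}(1,0)=\gamma_m/2$. Equating the two expressions for $\lambda_{n,m}$ furnished by Proposition \ref{pr:lambda_bipart} and Proposition \ref{pr:lambda_mu_connection} yields $\gamma_{n,m}=2\big(\mu_{n,m}(1,0)+\mu_{n,m}(0,1)\big)$ for all $n$. Since $\gamma_{n,m}\to\gamma_m$ while $\mu_{n,m}(0,1)\to 0$ by Lemma \ref{lem:bold_center} (as $h=1>0$), the base case follows at once.

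With the base case in hand I would run a forward induction on $k$. Solving the displayed recurrence for $\mu_{n,m}(k+1,0)$ expresses it explicitly through $\mu_{n,m}(k,0)$ and $\mu_{n,m}(k,1)$; assuming inductively that $\mu_{n,m}(k,0)\to\ell(k)$ and using $\tfrac{k(n+m)}{n}\to k$, $\gamma_{n,m}\to\gamma_m$, $\tfrac{n-k}{n}\to 1$, and $\mu_{n,m}(k,1)\to 0$ (Lemma \ref{lem:bold_center}), the limit $\ell(k+1)$ exists and satisfies
\begin{equation*}
(k+1)\,\ell(k+1)=(k-\gamma_m)\,\ell(k),\qquad k\ge 1.
\end{equation*}
Because the recursion determines $\mu_{n,m}(k+1,0)$ directly from already-controlled quantities, no compactness or subsequence-uniqueness argument is needed. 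Iterating from $\ell(1)=\gamma_m/2$ gives $\ell(k)=\tfrac{\gamma_m}{2}\prod_{j=1}^{k-1}\tfrac{j-\gamma_m}{j+1}=\tfrac12\cdot\tfrac{\gamma_m}{k!}\prod_{j=1}^{k-1}(j-\gamma_m)=\tfrac12 f_{\gamma_m}(k)$ by \eqref{eq:fgamma}; equivalently, the ratio $f_{\gamma_m}(k+1)/f_{\gamma_m}(k)=(k-\gamma_m)/(k+1)$ shows that $\tfrac12 f_{\gamma_m}$ solves the same recursion with the same initial value. The case $k=0$ is handled separately: $(0,0)$ is an absorbing state removed from the chain, so $\mu_{n,m}(0,0)=0$ for every $n$, and the asserted identity holds under the convention $f_{\gamma_m}(0)=0$.

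The main obstacle is the delicate limit in the first paragraph: every term of \eqref{eq:stationary_bipart} is of order $1/n$ and cancels to leading order, so the nontrivial content only appears after multiplying by $n+m$ and carefully tracking which rescaled terms survive. The two facts that make this go through are Lemma \ref{lem:bold_center}, which guarantees that the $h=1$ contribution is asymptotically negligible, and Proposition \ref{pr:lambda_mu_connection}, which pins down the initial value $\gamma_m/2$; without the latter the recursion would determine the limiting profile only up to an undetermined multiplicative constant.
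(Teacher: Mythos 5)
Your proposal is correct and follows essentially the same route as the paper: the base case $\mu_{n,m}(1,0)\to\gamma_m/2$ via Propositions \ref{pr:lambda_bipart} and \ref{pr:lambda_mu_connection} together with Lemma \ref{lem:bold_center}, then the $h=0$ slice of \eqref{eq:stationary_bipart} solved for $\mu_{n,m}(k+1,0)$ and passed to the limit to get the one-step recursion $(k+1)\ell(k+1)=(k-\gamma_m)\ell(k)$, identified with $\tfrac12 f_{\gamma_m}$. The only differences are cosmetic (you clear the $n+m$ factor before taking limits and treat $k=0$ explicitly), so there is nothing to add.
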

\begin{proof}
From Lemma \ref{lem:bold_center} we know that $\lim_{n\to\infty}\mu_{n,m}(0,1)=0$. Therefore it follows from Propositions \ref{pr:lambda_bipart} and \ref{pr:lambda_mu_connection} that 
\begin{equation}
\label{eq:base_asymp}
\mu_{\infty,m}(1,0) =\frac {\gamma_m}{2}.
\end{equation}
Returning to \eqref{eq:stationary_bipart} with $k\geq 1$ and $h=0$, we can write 
$$\lambda_{n,m}\, \mu_{n,m}(k,0) = \mu_{n,m}(k,0) \frac{n-k}{n} + \mu_{n,m}(k+1,0) \frac{k+1}{n+m}+\mu_{n,m}(k,1) \frac{n-k}{n(n+m)}.$$
Rearranging this equation while using Proposition \ref{pr:lambda_bipart} again leads to
\begin{equation}\label{eq:pre_limit}
\mu_{n,m}(k+1,0)=\frac{n+m}{k+1}\left(\mu_{n,m}(k,0)\left(\frac{k}{n}-\frac{\gamma_{n,m}}{n+m}\right)-\mu_{n,m}(k,1)\frac{n-k}{n(n+m)}\right).
\end{equation}
Letting $n\to\infty$ in \eqref{eq:pre_limit} while recalling that $\gamma_{n,m}\to\gamma_m$ and $\mu_{n,m}(k,1)\to 0$ results in
$$\mu_{\infty,m}(k+1,0)=\mu_{\infty,m}(k,0)\frac{k-\gamma_m}{k+1}.$$
Now we can argue inductively starting from \eqref{eq:base_asymp} to conclude that
\begin{align*}
\mu_{\infty,m}(k,0)&=\frac{\gamma_m}{2} \frac{1}{k!} \prod_{j=1}^{k-1}(j-\gamma_m)\\
&=\frac{1}{2} f_{\gamma_m}(k),~k\in\N.
\end{align*}
\end{proof} 

Finally we can give the proof of our main result.
\begin{proof}[Proof of Theorem \ref{th:main}]
By the invariance under relabeling of the two opinions, we know that $\mu_{n,m}(k,h)=\mu_{n,m}(n-k,m-h)$. Hence Proposition \ref{prop:gamma_m} implies that 
$$ \lim_{n\to\infty} \mu_{n,m} (n-k,m)=\lim_{n\to\infty}\mu_{n,m}(k,0) = \frac 12 f_{\gamma_m}(k),~k\in\N.$$
Since we know that $f_{\gamma_m}$ is a probability mass function, it follows from the Portmanteau theorem that the QSDs for the voter model on $K_{n,m}$ converge weakly as $n\to\infty$ to the probability distribution where $S$ attains a consensus of all ``0'' or all ``1'' each with probability $1/2$, and conditioned on the opinion of $S$, the number of vertices in $L$ which are of a different opinion has probability mass function $f_{\gamma_m}$. 
\end{proof}

\section*{Acknowledgments}
The authors would like to thank an anonymous referee for their useful comments.

\bibliographystyle{alea3_mod}
\bibliography{QSD_bib}

\end{document}